\newtheorem{theorem}{Theorem}[section]
\newtheorem{lemma}[theorem]{Lemma}
\newtheorem{corollary}[theorem]{Corollary}
\newtheorem{question}[theorem]{Question}
\theoremstyle{definition}
\newtheorem{definition}[theorem]{Definition}
\theoremstyle{remark}
\newtheorem{example}[theorem]{Example}
\begin{document}

\title{Regular Bases At Non-isolated Points And Metrization Theorems}

\author{Fucai Lin}
\address{Fucai Lin: Department of Mathematics and Information Science,
Zhangzhou Normal University, Zhangzhou 363000, P. R. China}
\email{linfucai2008@yahoo.com.cn}
\author{Shou Lin}
\address{Shou Lin: Institute of Mathematics, Ningde Teachers' College, Ningde, Fujian
352100, P. R. China}
\email{linshou@public.ndptt.fj.cn}
\author{Heikki J. K. Junnila}
\address{Heikki Junnila: Department of Mathematics, University of Helsinki,
Yliopistonkatu 5, Helsinki 10, Finland}
\email{heikki.junnila@helsinki.fi}
\thanks{Supported by the NSFC(No. 10971185) and the Educational Department of Fujian Province(No. JA09166).}

\subjclass[2000]{54D70; 54E35; 54D20} \keywords{Metrization; regular
bases; locally-finite families; $\beta$-spaces; proto-metrizable;
discretization; point-regular bases.}

\begin{abstract} In this paper, we define the spaces with a regular
base at non-isolated points and discuss some metrization theorems.
We firstly show that a space $X$ is a metrizable space, if
and only if $X$ is a regular space with a $\sigma$-locally finite
base at non-isolated points, if and only if $X$ is a perfect space
with a regular base at non-isolated points, if and only if $X$ is a
$\beta$-space with a regular base at non-isolated points. In
addition, we also discuss the relations between the spaces with a
regular base at non-isolated points and some generalized metrizable
spaces. Finally, we give an affirmative answer for a question posed by F. C. Lin and S. Lin in \cite{LL}, which also shows that a space with a regular base at non-isolated points has a point-countable base.
\end{abstract}

\maketitle

\parskip 0.15cm

\section{Introduction}
The bases of topological spaces occupy a core position in the study
of the topological theories and metrization problems, which has
produced many kinds of metrization theorems, and establishes a
foundation for the topological development \cite{Ho}. For example,
the following is a classic metrization theorem.

\begin{theorem}\label{t2}
The following are equivalent for a space $X$:
\begin{enumerate}
\item $X$ is metrizable;

\item $X$ is a $\mbox{T}_{1}$-space with a regular base;

\item $X$ is a regular space with a $\sigma$-locally finite base.
\end{enumerate}
\end{theorem}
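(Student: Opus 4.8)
The statement is the conjunction of two classical metrization theorems sharing condition (1): the equivalence (1)$\Leftrightarrow$(3) is the Nagata--Smirnov theorem, while (1)$\Leftrightarrow$(2) is Arhangel'skii's regular-base theorem. The plan is therefore to close two cycles through (1), namely (1)$\Rightarrow$(2)$\Rightarrow$(1) and (1)$\Rightarrow$(3)$\Rightarrow$(1). Throughout I use the Arhangel'skii definition: a base $\mathcal{B}$ is \emph{regular} if for every $x\in X$ and every open $U\ni x$ there is an open $V$ with $x\in V\subseteq U$ such that $\{B\in\mathcal{B}:B\cap V\neq\emptyset,\ B\setminus U\neq\emptyset\}$ is finite.

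The two directions out of (1) are routine and can be handled simultaneously. Fixing a compatible metric $d$ and applying Stone's theorem (every metric space is paracompact), for each $n$ I choose a locally finite open refinement $\mathcal{G}_n$ of the cover of $X$ by balls of radius $1/n$; then $\mathcal{B}=\bigcup_n\mathcal{G}_n$ is a $\sigma$-locally finite base, which gives (3), and metric spaces are of course $T_1$ and regular. To see that this same $\mathcal{B}$ witnesses (2), given $x$ and $U$ I pick $n$ with $B(x,3/n)\subseteq U$ and set $V=B(x,1/n)$: any member of $\mathcal{G}_m$ with $m\geq n$ that meets $V$ has diameter $<1/n$ and hence lies inside $U$, so only members of $\mathcal{G}_1,\dots,\mathcal{G}_{n-1}$ can meet both $V$ and $X\setminus U$; shrinking $V$ by the local finiteness of these finitely many families leaves only finitely many such members, so $\mathcal{B}$ is regular.

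For (3)$\Rightarrow$(1) I would run the classical Nagata--Smirnov argument. Writing the base as $\mathcal{B}=\bigcup_n\mathcal{B}_n$ with each $\mathcal{B}_n$ locally finite, I first upgrade $T_3$ to normality (a regular space with a $\sigma$-locally finite base is normal). Normality then feeds Urysohn's lemma: for suitable pairs $C,B$ of base members with $\overline{C}\subseteq B$ I build continuous functions and assemble them, index by index, into continuous pseudometrics $\rho_n$, where local finiteness is exactly what guarantees continuity of the aggregated sums. The weighted sum $d=\sum_n 2^{-n}\min(1,\rho_n)$ is then a metric, and a separation argument using the base shows that it induces the original topology.

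The real obstacle is (2)$\Rightarrow$(1), Arhangel'skii's theorem. First, $T_1$ together with a regular base readily yields regularity of $X$. Next observe that a regular base is automatically point-regular: taking $V$ as in the definition, every $B\ni x$ already meets $V$, so only finitely many such $B$ can fail to be contained in $U$. A point-regular base is a uniform base, and the classical uniform-base theorem then presents $X$ as a metacompact developable ($T_3$) space. This, however, is strictly weaker than metrizability --- uniform bases characterize the whole class of metacompact developable spaces --- so the heart of the matter is to exploit the extra content of a regular base, namely the control over the finitely many members that straddle the boundary between $V$ and $X\setminus U$, in order to refine the canonical development into a sequence of locally finite covers, i.e. to promote metacompactness to paracompactness (equivalently, collectionwise normality). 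This discretization is the genuinely hard step; once it is achieved, Bing's theorem that a collectionwise-normal developable space is metrizable closes the cycle.
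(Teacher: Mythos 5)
Your treatment of three of the four arrows is sound. The implications $(1)\Rightarrow(3)$ and $(3)\Rightarrow(1)$ are the standard Stone and Nagata--Smirnov arguments, correctly sketched, and $(1)\Rightarrow(2)$ works as you describe (modulo the harmless slip that a member of $\mathcal{G}_m$ refining the $1/m$-balls has diameter at most $2/m$, not $<1/n$; the containment in $B(x,3/n)\subseteq U$ still goes through). The genuine gap is in $(2)\Rightarrow(1)$. After correctly observing that a $T_1$ space with a regular base is regular, and that a regular base is point-regular (hence uniform, hence $X$ is metacompact and developable), you state that ``the heart of the matter'' is to use the extra strength of regularity of the base to promote metacompactness to collectionwise normality, and you call this ``the genuinely hard step'' --- but you never carry it out. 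That step \emph{is} the content of Arhangel'skii's theorem; identifying it is not proving it, so the cycle through (2) is not closed.

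The route you gesture at is also harder than necessary. The standard argument --- and the one this paper is implicitly built on, since its Lemmas~\ref{l0} and~\ref{l2} are exactly the ``at non-isolated points'' versions of it (cf.\ Lemma 5.4.3 of Engelking) --- reduces (2) directly to (3): if $\mathcal{B}$ is a regular base, then the family $\mathcal{B}^{m}$ of maximal members of $\mathcal{B}$ is a locally finite cover of $X$, and iterating via $\mathcal{B}_{1}=\mathcal{B}^{m}$, $\mathcal{B}_{i+1}=\bigl(\mathcal{B}\setminus\bigcup_{j\leq i}\mathcal{B}_{j}\bigr)^{m}$ exhibits $\mathcal{B}$ itself as a $\sigma$-locally finite base. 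The local finiteness of $\mathcal{B}^{m}$ is a short consequence of regularity of the base: first, applying the definition with $U$ a member of $\mathcal{B}$ containing $x$ shows that chains in $(\mathcal{B},\subseteq)$ through $x$ are finite, so every point lies in some maximal member $B_{0}$; then applying the definition with $U=B_{0}$ yields a neighborhood $V\subseteq B_{0}$ of $x$ such that any maximal member meeting $V$ is either contained in $B_{0}$ (hence equals $B_{0}$ by maximality) or is one of the finitely many members meeting $V$ and not contained in $B_{0}$. Together with the regularity of $X$ that you already established, this gives $(2)\Rightarrow(3)$, and Nagata--Smirnov finishes. You should either supply this decomposition or actually prove the collectionwise-normality upgrade you invoke; as written, the proposal leaves the one nontrivial implication unproved.
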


In recent years, the theory of regular bases in topological spaces
played an important role in generalized metrizable spaces
\cite{AJRS, STY}. On the other hand, in the study of the theories of
topological spaces, we are mainly concerned with the properties of
neighborhoods on non-isolated points, and also discuss the relation
between their properties and global properties. For example, a study
of spaces with a sharp base, a weakly uniform base or an uniform
base at non-isolated points \cite{AJRS, Au, LL} shows that some
properties of a non-isolated point set of a topological space will
help us discuss the global construction of a space. Especially, a space
$X$ with a uniform base at non-isolated points if and only if $X$
is the open and boundary-compact image of a metric space \cite{LL}.
The most typical example is
the spaces obtained from a metrizable space by isolating the points of a subset.

Let $\mathcal{B}$ be a base for a space $X$. For any $x\in X$,
the base $\mathcal{B}$ of $X$ is called {\it regular} at a point $x$ if, for every
neighborhood $U$ of $x$, there exists an open subset $V$ such that
$x\in V\subset U$ and $\{B\in\mathcal{B}: B\cap
V\neq\emptyset\mbox{~and~}B\not\subset U\}$ is finite.

By Theorem~\ref{t2},  every metric space has a base which is regular at
non-isolated points. However, there exists a non-metrizable space
with a base which is regular at non-isolated points, see the
following Example~\ref{e0}.

\begin{example}\label{e0}
Let $X$ be the closed unit interval $\mathbb I=[0, 1]$ and $B$ a
Bernstein subset of $I$. In other words, $B$ is an uncountable set
which contains no uncountable closed subset of $I$. Endow $X$ with
the following topology, i.e., Michael line \cite{EA}: $G$ is an open
subset for $X$ if and only if $G=U\cup Z$, where $U$ is an open
subset of $\mathbb I$ with Euclidean topology and $Z\subset B$. Let
$\mathcal{B}$ be a base of $\mathbb I$ with the Euclidean topology,
where $\mathcal{B}$ is regular at every point of $\mathbb I$. Then
$\mathcal{P}=\mathcal{B}\cup\{\{x\}:x\in B\}$ is a base for $X$ and
also regular at non-isolated points.
\end{example}

Hence this causes our interests in a study of spaces with a base
which is regular at non-isolated points, and the related problems of
the metrizability. In this paper, we shall prove that spaces with a
regular base at non-isolated points are strictly between the
discretizations of
metrizable spaces and proto-metrizable
spaces, and we also obtain some metrization theorems which help us
to better understand the relation between the properties at
non-isolated points and global properties in the study the
generalized metrizable spaces.

In this paper all spaces are $\mbox{T}_{1}$ unless it is explicitly
stated which separation axiom is assumed, and all maps are
continuous and onto. By $\mathbb{R, N}$, denote the set of real
numbers and positive integers, respectively. For a space $X$, let
$I=I(X)=\{x:x \mbox{ is an isolated point of } X\}$ and $\mathcal
I(X)=\{\{x\}:x\in I(X)\}$. Let $\mathcal{P}$ be a family of subsets
for $X$, and we denote

\hspace{2cm}$\mbox{st}(x, \mathcal{P})=\cup\{P\in\mathcal{P}: x\in
P\}, x\in X$;

\hspace{2cm}$\mbox{st}(A, \mathcal{P})=\cup\{P\in\mathcal{P}: A\cap
P\not=\emptyset\}, A\subset X$;

\hspace{2cm}$\mathcal{P}^{m}=\{P\in\mathcal{P}:\mbox{~if~}P\subset
Q\in\mathcal{P}, \mbox{~then~}Q=P\}$.\

Readers may refer to \cite{ER, Ls} for unstated definitions and
terminology.

\section{Regular Bases at non-isolated points}
\begin{definition}
Let $\mathcal{B}$ be a base of a space $X$. $\mathcal{B}$ is a {\it
regular base}, see e.g.~\cite{ER}~({\it regular base at non-isolated points},
resp.) for $X$ if for each (non-isolated, resp.) point $x\in X$,
$\mathcal{B}$ is regular at $x$.
\end{definition}

It is obvious that regular bases
$\Rightarrow$ regular bases at non-isolated points, but regular
bases at non-isolated points$\nRightarrow$ regular bases by Example~\ref{e0}.

\begin{definition}\label{d0}
Let $\{\mathcal{W}_{i}\}_{i\in\mathbb{N}}$ be a sequence of open
covers of a space $X$ and $\mathcal I(X)\subset
\bigcup_{i\in\mathbb{N}}\mathcal{W}_{i}$.
$\{\mathcal{W}_{i}\}_{i\in\mathbb{N}}$ is called a {\it strong
development}, see e.g.~\cite{ER}({\it strong development at non-isolated
points}, resp.) for $X$ if for every $x\in X$~($x\in X-I$) and each
neighborhood $U$ of $x$ there exist a neighborhood $V$ of $x$ and an
$i\in \mathbb{N}$ such that $\mbox{st}(V,\mathcal{W}_{i})\subset
U$. If $\{\mathcal{W}_{i}\}_{i\in\mathbb{N}}$ is a strong development at
non-isolated points, then so is $\{\mathcal{W}_{i}\cup \mathcal
I(X)\}_{i\in\mathbb{N}}$.
\end{definition}

The following Lemma~\ref{l0} is proved similarly to Lemma
5.4.3 in \cite{ER}, and leave to the reader the easy proofs of Lemma~\ref{l1} and~\ref{l2}.

\begin{lemma}\label{l0}
If $\mathcal{B}$ is a regular base at non-isolated points for a
space $X$, then the family $\mathcal{B}^{m}\subset \mathcal{B}$ is
locally finite at non-isolated points and also covers $X-I$.
\end{lemma}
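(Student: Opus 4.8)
The plan is to establish the two assertions separately---that $\mathcal{B}^m$ covers $X-I$, and that $\mathcal{B}^m$ is locally finite at each point of $X-I$---both by playing regularity against the maximality that defines $\mathcal{B}^m$.

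First I would show that $\mathcal{B}^m$ covers $X-I$. Fix a non-isolated point $x$ and consider the nonempty subfamily $\mathcal{B}_x=\{B\in\mathcal{B}:x\in B\}$. The crucial observation is that $\mathcal{B}_x$ admits no infinite strictly increasing chain $B_0\subsetneq B_1\subsetneq\cdots$: applying regularity at $x$ to the open neighborhood $U=B_1$ produces an open $V$ with $x\in V\subset B_1$ for which $\{B\in\mathcal{B}:B\cap V\neq\emptyset\text{ and }B\not\subset B_1\}$ is finite, yet every $B_k$ with $k\geq 2$ contains $x\in V$ and strictly contains $B_1$, hence lies in this finite set---a contradiction. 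Thus $\mathcal{B}_x$ satisfies the ascending chain condition, so any member of $\mathcal{B}_x$ lies below a maximal element of $\mathcal{B}_x$; and since a member of $\mathcal{B}$ that contains some element of $\mathcal{B}_x$ again belongs to $\mathcal{B}_x$, a maximal element of $\mathcal{B}_x$ is maximal in $\mathcal{B}$, i.e.\ lies in $\mathcal{B}^m$. Hence $x$ is covered.

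Next I would prove local finiteness of $\mathcal{B}^m$ at each non-isolated $x$. Using the covering just obtained, choose $B^*\in\mathcal{B}^m$ with $x\in B^*$ and apply regularity at $x$ to the neighborhood $U=B^*$, obtaining an open $V$ with $x\in V\subset B^*$ such that $\mathcal{F}=\{B\in\mathcal{B}:B\cap V\neq\emptyset\text{ and }B\not\subset B^*\}$ is finite. Now any $B\in\mathcal{B}^m$ meeting $V$ either satisfies $B\not\subset B^*$, whence $B\in\mathcal{F}$, or satisfies $B\subset B^*$, whence maximality of $B$ forces $B=B^*$. Consequently $\{B\in\mathcal{B}^m:B\cap V\neq\emptyset\}\subset\mathcal{F}\cup\{B^*\}$ is finite, so $V$ witnesses local finiteness of $\mathcal{B}^m$ at $x$.

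Both halves are short once the right neighborhood is chosen, and the only real subtlety---the main obstacle---is arranging the interaction with maximality: the argument hinges on taking $U$ to be a maximal basic set $B^*$ through $x$, so that those members of $\mathcal{B}^m$ which meet $V$ and lie inside $U$ collapse to the single set $B^*$ by maximality, while those not contained in $U$ are controlled by regularity. I would take care that regularity is invoked only at non-isolated points (both $U=B_1$ and $U=B^*$ are neighborhoods of such an $x$) and note the routine point that members of a base are open, so that $B_1$ and $B^*$ are indeed neighborhoods of $x$.
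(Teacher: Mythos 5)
Your proof is correct, and it is essentially the argument the paper has in mind: the paper gives no proof of Lemma~\ref{l0}, deferring to Lemma 5.4.3 of Engelking, and your two steps (using regularity with $U$ a basic neighborhood to kill infinite ascending chains, hence producing maximal elements through each non-isolated point, then applying regularity with $U=B^{*}$ maximal so that the members of $\mathcal{B}^{m}$ meeting $V$ reduce to $\mathcal{F}\cup\{B^{*}\}$) are exactly that classical argument localized to non-isolated points. No gaps.
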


\begin{lemma}\label{l1}
Let $\mathcal{B}$ be a regular base at non-isolated points for $X$.
If $\mathcal{B}'\subset\mathcal{B}$ is point-finite at non-isolated
points, then
$\mathcal{B}^{\prime\prime}=(\mathcal{B}-\mathcal{B}^{\prime})\cup\mathcal{I}(X)$
is a regular base at non-isolated points for $X$.
\end{lemma}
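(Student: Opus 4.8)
The plan is to verify two things about $\mathcal{B}''=(\mathcal{B}-\mathcal{B}')\cup\mathcal{I}(X)$: that it is a base for $X$, and that it is regular at every non-isolated point. The whole argument rests on one preliminary observation, which is the only place the $\mbox{T}_1$ axiom and non-isolatedness are genuinely used. Namely, I claim that for a non-isolated point $x$ and any neighborhood $U$ of $x$, the subfamily $\mathcal{A}=\{B\in\mathcal{B}:x\in B\subset U\}$ is infinite. I would prove this by contradiction: if $\mathcal{A}$ were finite (it is nonempty since $\mathcal{B}$ is a base), then $W=\bigcap\mathcal{A}$ is an open set with $x\in W\subset U$, and for any open $O\ni x$ the set $O\cap U$ contains some $B\in\mathcal{B}$ with $x\in B\subset O\cap U$, whence $B\in\mathcal{A}$ and $O\supset B\supset W$. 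Thus $W$ is contained in every open neighborhood of $x$, which in a $\mbox{T}_1$ space forces $W=\{x\}$ to be open, contradicting that $x$ is non-isolated.

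Granting this, the base property of $\mathcal{B}''$ is straightforward. At an isolated point $x$, the singleton $\{x\}\in\mathcal{I}(X)\subset\mathcal{B}''$ is contained in every neighborhood of $x$, so $\mathcal{B}''$ is a base there. At a non-isolated point $x$ with neighborhood $U$, the family $\mathcal{A}$ is infinite by the observation, while $\{B\in\mathcal{B}':x\in B\}$ is finite by the point-finiteness of $\mathcal{B}'$ at non-isolated points; hence $\mathcal{A}\setminus\mathcal{B}'$ is nonempty, and any member lies in $\mathcal{B}-\mathcal{B}'\subset\mathcal{B}''$ with $x\in B\subset U$. This is the one and only use of the point-finiteness hypothesis.

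For regularity I would show that the witness $V$ coming from $\mathcal{B}$ works unchanged for $\mathcal{B}''$. Fix a non-isolated point $x$ and a neighborhood $U$; regularity of $\mathcal{B}$ at $x$ gives an open $V$ with $x\in V\subset U$ such that $\mathcal{F}=\{B\in\mathcal{B}:B\cap V\neq\emptyset\text{ and }B\not\subset U\}$ is finite. I then examine $\{B\in\mathcal{B}'':B\cap V\neq\emptyset\text{ and }B\not\subset U\}$ piece by piece. Its members drawn from $\mathcal{B}-\mathcal{B}'$ form a subfamily of $\mathcal{F}$, hence finite. Its members drawn from $\mathcal{I}(X)$ would be singletons $\{y\}$ with $y\in V$ and $\{y\}\not\subset U$, i.e.\ $y\notin U$; but $V\subset U$ makes this impossible, so $\mathcal{I}(X)$ contributes nothing. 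Therefore the family is finite, and $\mathcal{B}''$ is regular at $x$.

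The main (and essentially only) obstacle is the infinitude claim in the first paragraph; everything after it is bookkeeping. The regularity step in particular is nearly automatic, since deleting $\mathcal{B}'$ can only shrink the relevant ``bad'' family and the appended singletons can never witness the condition $B\not\subset U$ once they meet a set $V$ lying inside $U$.
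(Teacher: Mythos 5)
Your proof is correct. The paper explicitly leaves this lemma to the reader, so there is no official proof to compare against, but your argument is complete and well organized: the key point is exactly the one you isolate, namely that at a non-isolated point $x$ of a $\mbox{T}_1$-space every neighborhood $U$ contains infinitely many basic sets $B$ with $x\in B\subset U$ (otherwise the finite intersection would be the open singleton $\{x\}$), so removing the point-finite subfamily $\mathcal{B}'$ still leaves a base at $x$; the regularity check is then immediate since passing to $\mathcal{B}''$ only shrinks the family $\{B: B\cap V\neq\emptyset,\ B\not\subset U\}$ and the adjoined singletons $\{y\}$ with $y\in V\subset U$ automatically satisfy $\{y\}\subset U$.
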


\begin{lemma}\label{l2}
If $\mathcal{B}$ is a regular base at non-isolated points for $X$,
put$$\mathcal{B}_{1}=\mathcal{B}^{m},\ \
\mathcal{B}_{i}=[(\mathcal{B}-\bigcup_{j=1}^{i-1}\mathcal{B}_{j})\cup\mathcal{I}(X)]^{m},
i=2,3,\cdots .$$ Then
$\mathcal{B}=(\bigcup_{i=1}^{\infty}\mathcal{B}_{i})\cup\mathcal{I}(X)$,
and for each $i\in\mathbb{N}$, $\mathcal{B}_{i}$ is locally finite
at non-isolated points and $\mathcal{B}_{i+1}\cup\mathcal{I}(X)$
refines $\mathcal{B}_{i}\cup\mathcal{I}(X)$.
\end{lemma}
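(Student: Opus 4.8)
The plan is to argue by induction on $i$, using Lemma~\ref{l0} and Lemma~\ref{l1} as the two engines, together with one extra observation about chains. I will call this observation the \emph{chain lemma}: if $\mathcal{F}$ is a regular base at non-isolated points and $B\in\mathcal{F}$ contains a non-isolated point, then there is no infinite strictly increasing chain $B=C_{0}\subsetneq C_{1}\subsetneq C_{2}\subsetneq\cdots$ of members of $\mathcal{F}$. To see this, fix a non-isolated point $x\in B$ and apply regularity at $x$ to the neighborhood $U=C_{1}$: one obtains an open $V$ with $x\in V\subset C_{1}$ such that $\{D\in\mathcal{F}:D\cap V\neq\emptyset\mbox{ and }D\not\subset C_{1}\}$ is finite. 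But for every $n\geq 2$ we have $x\in V\cap C_{n}$ and $C_{n}\supsetneq C_{1}$, so each such $C_{n}$ lies in this finite set, a contradiction. Consequently each $B\in\mathcal{F}$ meeting $X-I$ has a finite \emph{height} $h_{\mathcal{F}}(B)$, the supremum of the lengths of increasing chains of $\mathcal{F}$ issuing from $B$, and in particular $B$ is contained in a maximal element of $\mathcal{F}$, i.e.\ in a member of $\mathcal{F}^{m}$.

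Granting the chain lemma, I would first prove the local finiteness assertion (the second conclusion), together with the regularity of the intermediate families, by induction on $i$. For $i=1$, the family $\mathcal{B}_{1}=\mathcal{B}^{m}$ is locally finite at non-isolated points by Lemma~\ref{l0}. Assuming $\mathcal{B}_{1},\dots,\mathcal{B}_{i-1}$ are locally finite at non-isolated points, their union is locally finite, hence point-finite, at non-isolated points, so $\mathcal{B}'=\mathcal{B}\cap\bigcup_{j=1}^{i-1}\mathcal{B}_{j}$ is a subfamily of $\mathcal{B}$ that is point-finite at non-isolated points. By Lemma~\ref{l1} the family $\mathcal{F}_{i}=(\mathcal{B}-\mathcal{B}')\cup\mathcal{I}(X)=(\mathcal{B}-\bigcup_{j=1}^{i-1}\mathcal{B}_{j})\cup\mathcal{I}(X)$ is again a regular base at non-isolated points, and $\mathcal{B}_{i}=\mathcal{F}_{i}^{m}$; so Lemma~\ref{l0} applied to $\mathcal{F}_{i}$ gives that $\mathcal{B}_{i}$ is locally finite at non-isolated points, completing the induction.

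For the decomposition $\mathcal{B}=(\bigcup_{i}\mathcal{B}_{i})\cup\mathcal{I}(X)$, note first that $\mathcal{I}(X)\subset\mathcal{B}$ (for isolated $x$ the only base element inside the neighborhood $\{x\}$ is $\{x\}$ itself), and that each $\mathcal{B}_{i}=\mathcal{F}_{i}^{m}\subset\mathcal{F}_{i}\subset\mathcal{B}\cup\mathcal{I}(X)=\mathcal{B}$, so the right-hand side is contained in the left. For the reverse inclusion I would take $B\in\mathcal{B}$ meeting $X-I$ and show $B\in\mathcal{B}_{h(B)+1}$ by induction on its height $h(B)=h_{\mathcal{B}}(B)$: if $h(B)=0$ then $B\in\mathcal{B}^{m}=\mathcal{B}_{1}$; if $h(B)=n>0$ then, unless $B$ already lies in an earlier layer, every $C\supsetneq B$ in $\mathcal{B}$ has strictly smaller height and hence, by the inductive hypothesis, already lies in $\bigcup_{j\leq n}\mathcal{B}_{j}$, whence $B$ is maximal in $\mathcal{F}_{n+1}$ and $B\in\mathcal{B}_{n+1}$. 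Finally the refinement statement follows from the chain lemma: a member $B$ of $\mathcal{B}_{i+1}\cup\mathcal{I}(X)$ is either in $\mathcal{I}(X)$, in which case it refines itself, or it is a member of $\mathcal{B}-\bigcup_{j\leq i}\mathcal{B}_{j}\subset\mathcal{F}_{i}$ that is not maximal in $\mathcal{F}_{i}$, and then it is contained in some element of $\mathcal{F}_{i}^{m}=\mathcal{B}_{i}$.

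The main obstacle is the chain lemma itself, i.e.\ showing that regularity \emph{at the non-isolated points} already forces every increasing chain through a base element meeting $X-I$ to terminate; once this finiteness is in hand, both the local finiteness and the refinement reduce to bookkeeping with Lemmas~\ref{l0} and~\ref{l1}. The delicate point in that bookkeeping is the treatment of base elements all of whose points are isolated, for which the chain argument gives no leverage; it is precisely to neutralize these that $\mathcal{I}(X)$ is re-inserted at each stage, keeping the intermediate families $\mathcal{F}_{i}$ regular bases at non-isolated points so that Lemma~\ref{l0} continues to apply.
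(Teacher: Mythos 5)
The paper gives no proof of this lemma (it is explicitly ``left to the reader''), so your proposal can only be judged on its own terms. Your architecture --- the chain/finite-superset observation, induction on $i$ via Lemmas~\ref{l0} and~\ref{l1}, and induction on height for the decomposition --- is the right one and is essentially the standard argument behind Engelking's Lemma 5.4.3. One small logical point first: from ``no infinite strictly increasing chain issues from $B$'' one cannot in general conclude that the supremum of chain lengths is finite. But your own argument, applied with $U=B$ in place of $U=C_{1}$, shows directly that $\{D\in\mathcal{F}:D\supsetneq B\}$ is finite whenever $B$ meets $X-I$, which yields finite height and the existence of a maximal element above $B$ in one stroke; state it that way.

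The substantive gap is the treatment of members of $\mathcal{B}$ contained in $I(X)$ that are not singletons. Your height induction only places into $\bigcup_{i}\mathcal{B}_{i}$ those $B\in\mathcal{B}$ that meet $X-I$, and re-inserting $\mathcal{I}(X)$ at each stage does not put a non-singleton $B\subset I(X)$ into the union; in fact the stated equality $\mathcal{B}=(\bigcup_{i}\mathcal{B}_{i})\cup\mathcal{I}(X)$ fails for such members. Concretely, let $X$ be the disjoint sum of a convergent sequence $S=\{0\}\cup\{1/n:n\in\mathbb{N}\}$ and a discrete copy of $\mathbb{N}$, and let $\mathcal{B}$ consist of a regular base of $S$ together with all singletons and all initial segments $\{1,\dots,n\}$ of the discrete part; this $\mathcal{B}$ is regular at the unique non-isolated point $0$, yet no initial segment with $n\geq 2$ is ever maximal in any $\mathcal{F}_{i}$ (it is properly contained in $\{1,\dots,n+1\}$, which is never removed), so it lies in no $\mathcal{B}_{i}$ and not in $\mathcal{I}(X)$. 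The repair is to normalize first: replace every $B\in\mathcal{B}$ with $B\subset I(X)$ by the singletons of its points --- the result is still a base and still regular at non-isolated points --- after which your argument proves the equality verbatim; this weaker form is also all the paper ever uses. A related small patch: your refinement step invokes ``non-maximal in $\mathcal{F}_{i}$ implies below a member of $\mathcal{F}_{i}^{m}$'', which your chain lemma only justifies for $B$ meeting $X-I$. For arbitrary $B\in\mathcal{B}_{i+1}\setminus\mathcal{I}(X)$ argue instead that, since $B$ is maximal in $\mathcal{F}_{i+1}$, every strict superset of $B$ in $\mathcal{F}_{i}$ lies in $\mathcal{F}_{i}-\mathcal{F}_{i+1}\subset\mathcal{B}_{i}$, and at least one such superset exists because $B\notin\mathcal{B}_{i}=\mathcal{F}_{i}^{m}$.
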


Recall that a topological space $X$ is {\it monotonically
normal} \cite{HLZ} if for each ordered pair $(p, C)$, where $C$ is a
closed set for $X$ and $p\in X-C$, there exists an open subset $H(p,
C)$ satisfying the following conditions:

(i)\ $p\in H(p, C)\subset X-C$;

(ii)\ For every closed subset $D$ for $X$, if $D\subset C$,
then $H(p, C)\subset H(p, D)$;

(iii)\ If $p\not=q\in X$, then $H(p, \{q\})\cap H(q,
\{p\})=\emptyset$.

A $\mbox{T}_{2}$-paracompact space or monotonically normal space is
a collectionwise normal space \cite{HLZ}.

\begin{lemma}\label{l3}
If a space $X$ has a strong development at non-isolated points, then
$X$ is a monotonically normal and paracompact space.
\end{lemma}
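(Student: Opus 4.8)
The engine for both conclusions is a \emph{double-star} strengthening of the hypothesis at non-isolated points, so I would first put the development into a convenient form. Replacing each $\mathcal{W}_i$ by the family of all finite intersections $W_1\cap\cdots\cap W_i$ ($W_j\in\mathcal{W}_j$) together with $\mathcal{I}(X)$, I obtain a new sequence $\{\mathcal{W}_i\}_i$ which still covers $X$, which refines the old sequence level by level (so stars can only shrink and the strong-development condition at non-isolated points is preserved), which is decreasing in the sense that $\mathcal{W}_{i+1}$ refines $\mathcal{W}_i$, and which contains $\mathcal{I}(X)$ at every level. Decreasingness gives $\mbox{st}(x,\mathcal{W}_{i+1})\subset\mbox{st}(x,\mathcal{W}_i)$ for all $x$, and since $\mbox{st}(x,\mathcal{W}_i)\subset\mbox{st}(V,\mathcal{W}_i)$ whenever $x\in V$, the hypothesis already yields that $\{\mbox{st}(x,\mathcal{W}_i)\}_i$ is a neighbourhood base at each non-isolated $x$. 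Feeding the witnessing neighbourhood $V$ of the strong-development condition back into this base, I would then prove the key fact: for every non-isolated $x$ and every neighbourhood $U$ of $x$ there is an $i$ with $\mbox{st}(\mbox{st}(x,\mathcal{W}_i),\mathcal{W}_i)\subset U$. This is routine once the covers are decreasing.

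For monotone normality I would define $H$ as follows. If $p$ is isolated, put $H(p,C)=\{p\}$; if $p$ is non-isolated, let $n(p,C)$ be the least $i$ with $\mbox{st}(\mbox{st}(p,\mathcal{W}_i),\mathcal{W}_i)\subset X-C$ (which exists by the key fact) and put $H(p,C)=\mbox{st}(p,\mathcal{W}_{n(p,C)})$. Condition (i) is immediate, and (ii) follows because $D\subset C$ forces $X-C\subset X-D$, hence $n(p,D)\le n(p,C)$, whence the nesting of stars gives $H(p,C)\subset H(p,D)$. The delicate point is (iii): for distinct non-isolated $p,q$, assuming $a:=n(p,\{q\})\le n(q,\{p\})=:b$ and a common point $z\in H(p,\{q\})\cap H(q,\{p\})$, the refinement property lets me pull the member of $\mathcal{W}_b$ through $z$ and $q$ up to a member of $\mathcal{W}_a$ meeting $\mbox{st}(p,\mathcal{W}_a)$, which forces $q\in\mbox{st}(\mbox{st}(p,\mathcal{W}_a),\mathcal{W}_a)\subset X-\{q\}$, a contradiction; the mixed and both-isolated cases are trivial, since a singleton $\{p\}$ never meets a star lying in $X-\{p\}$. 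Thus $X$ is monotonically normal, and in particular regular and collectionwise normal.

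For paracompactness I would show that every open cover $\mathcal{U}$ has a locally finite open refinement, which suffices by regularity. The isolated points are the source of the difficulty, and I would quarantine them. For each non-isolated $y$ choose $U_y\in\mathcal{U}$ and $m(y)$ with $\mbox{st}(\mbox{st}(y,\mathcal{W}_{m(y)}),\mathcal{W}_{m(y)})\subset U_y$; the single stars $B_y=\mbox{st}(y,\mathcal{W}_{m(y)})$ then form an open cover of the closed set $F=X-I$ by sets refining $\mathcal{U}$. The heart of the argument is to extract from $\{B_y\}$ a locally finite (in $X$) open family $\mathcal{G}$ still covering $F$. Setting $O=\bigcup\mathcal{G}$, the remainder $X-O$ is contained in $I$ and is closed, hence closed and discrete, so the singletons $\{\{x\}:x\in X-O\}$ form a locally finite family of open sets, each contained in some member of $\mathcal{U}$. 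Then $\mathcal{G}\cup\{\{x\}:x\in X-O\}$ is the desired locally finite open refinement of $\mathcal{U}$.

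The main obstacle is the extraction of $\mathcal{G}$: one must run the well-ordered shrinking of the cover $\{B_y\}$ so that the resulting refinement is locally finite \emph{as a family in} $X$, and the only tool available is the double-star inequality at non-isolated points, which here plays the role of the triangle inequality in Stone's construction of locally finite refinements for metric spaces. Equivalently, I am showing that the closed subspace $F=X-I$, whose subspace topology is governed by the strong development at all of its points, is metrizable, and that its covering by the $B_y$ can be refined without losing local finiteness when viewed inside $X$. Verifying that the double stars give exactly the separation needed for \emph{ambient} local finiteness is where the real work lies, whereas the treatment of the isolated points, as above, is only bookkeeping.
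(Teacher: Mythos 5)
Your reduction to a decreasing sequence of covers and the resulting double-star property at non-isolated points are correct, and your monotone normality argument goes through: it is essentially the paper's argument (the paper phrases the same condition as $\mbox{st}(p,\mathcal{W}_n)\cap\mbox{st}(C,\mathcal{W}_n)=\emptyset$ and verifies (iii) by observing that the two minimal indices must coincide, while you verify it by pulling a member of $\mathcal{W}_b$ up to one of $\mathcal{W}_a$; both are fine). The problem is the paracompactness half. You reduce everything to "extract from $\{B_y\}$ a locally finite (in $X$) open family $\mathcal{G}$ still covering $F=X-I$" and then explicitly defer that extraction, saying it is "where the real work lies." That extraction \emph{is} the content of the lemma's second assertion; as written, the proposal describes the shape of a proof but does not contain one. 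Nothing you have established (the double-star inequality, metrizability of the closed subspace $F$) yields ambient local finiteness in $X$ without a further construction: a locally-finite-in-$F$ refinement of $\{B_y\cap F\}$ need not extend to open sets of $X$ that are locally finite at points of $X$, and the isolated points accumulating on $F$ are exactly where this can fail.

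The paper fills this gap by not aiming for a single locally finite refinement at all. Given an open cover $\{G_s\}_{s\in S}$, it well-orders $S_0=\{s:G_s\cap(X-I)\neq\emptyset\}$ and sets $F_{s,i}=X-(\mbox{st}(X-G_s,\mathcal{W}_i)\cup\bigcup_{s'<s}G_{s'})$. The claim of your first paragraph shows $\{F_{s,i}\}_{s\in S_0,\,i\in\mathbb{N}}$ covers $X-I$, and for each fixed $i$ the family $\{F_{s,i}\}_{s\in S_0}$ is discrete and closed in $X$ (the sets are pairwise disjoint by construction, and $G_{s(x)}\cap\mbox{st}(x,\mathcal{W}_i)$ is a neighbourhood of a non-isolated $x$ meeting at most one of them). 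Monotone normality, proved first, gives collectionwise normality, which expands each discrete closed level to a discrete open family $\{G_{s,i}\}_s$ with $F_{s,i}\subset G_{s,i}\subset G_s$; adding the singletons of the isolated points not yet covered at each level produces a $\sigma$-locally finite open refinement, and regularity then gives paracompactness by the standard characterization. If you want to keep your outline, you would need to carry out an analogue of this well-ordered shrinking (or of Stone's argument) explicitly; without it the proof is incomplete.
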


\begin{proof}
Let $\{\mathcal{W}_{i}\}_{i\in\mathbb{N}}$ be a strong development
at non-isolated points for $X$, where $\mathcal{W}_{i+1}$ refines
$\mathcal{W}_{i}$ for every $i\in\mathbb{N}$.

(1) Claim.\ Let $A$ be a closed subset for $X$. If $x\in (X-A)\cap (X-I)$,
then there exists an $i\in\mathbb{N}$ such that $\mbox{st}(x,
\mathcal{W}_{i})\cap\mbox{st}(A, \mathcal{W}_{i})=\emptyset$.

In fact, since $X-A$ is an open neighborhood of $x$, there exists a
$j\in\mathbb{N}$ and an open neighborhood $V$ of $x$ such that
$\mbox{st}(V, \mathcal{W}_{j})\subset X-A$. Also, there exists a
$i\geq j$ such that $\mbox{st}(x, \mathcal{W}_{i})\subset V$.
Since $\mbox{st}(A,
\mathcal{W}_{i})\subset X-V$, we have $\mbox{st}(x,
\mathcal{W}_{i})\cap\mbox{st}(A, \mathcal{W}_{i})=\emptyset.$

(2)\ $X$ is a monotonically normal space.

Let $C$ be a closed subset for $X$ and $p\in X-C$. If $p\in I$, then
we let $H(p, C)=\{p\}$; if $p\in X-I$, then there exists a minimum
$n\in\mathbb{N}$ such that $\mbox{st}(p,
\mathcal{W}_{n})\cap\mbox{st}(C, \mathcal{W}_{n})=\emptyset$ by (1),
so we let $H(p, C)=\mbox{st}(p, \mathcal{W}_{n})$. Then $H(p, C)$ is
an open subset for $X$. Clearly this definition of
$H(p, C)$ satisfies the conditions (i) and (ii) in the above
definition of monotonically normal spaces. We next prove that it
also satisfies (iii). In fact, for any distinct points $p, q$ in
$X-I$, fix the $n, m$ for which: $$H(p, \{q\})=\mbox{st}(p, \mathcal{W}_{n}) \mbox{~and~}
H(q, \{p\})=\mbox{st}(q, \mathcal{W}_{m}).$$ Then $$\mbox{st}(p,
\mathcal{W}_{n})\cap\mbox{st}(q, \mathcal{W}_{n})=\emptyset
\mbox{~and~} \mbox{st}(p, \mathcal{W}_{m})\cap\mbox{st}(q,
\mathcal{W}_{m})=\emptyset.$$ By the choice of $n, m$, we have
$n=m$, i.e, $H(p, \{q\})\cap H(q, \{p\})=\emptyset$. Hence it also
satisfies (iii) in the definition of monotonically normal spaces.

(3)\ $X$ is a paracompact space.

Let $\{G_{s}\}_{s\in S}$ be an open cover for $X$ and $S_{0}=\{s\in
S:G_{s}\cap (X-I)\neq\emptyset\}$. Fix a well-order by
``$<$'' on $S_{0}$. For every $i\in\mathbb{N},\ s\in S_{0}$, put

\hspace*{\fill}$F_{s, i}=X-(\mbox{st}(X-G_{s}, \mathcal{W}_{i})
\cup(\bigcup_{s^{\prime}< s}G_{s^{\prime}}))$,\hspace*{\fill}\\
then $F_{s, i}\subset G_{s}$.

(3.1)\ The closed family $\{F_{s, i}\}_{s\in S_{0}, i\in
\mathbb{N}}$ covers $X-I$.

Indeed, for every $x\in X-I$, there exists a minimum $s(x)\in
S_{0}$ such that $x\in G_{s(x)}$. Since
$\{\mathcal{W}_{i}\}_{i\in\mathbb{N}}$ is a strong development at
non-isolated points for $X$, there exists an $i(x)\in\mathbb{N}$
such that $\mbox{st}(x, \mathcal{W}_{i(x)})\subset G_{s(x)}$.  Hence
$x\in F_{s(x), i(x)}$.

(3.2)\ For every $i\in\mathbb{N}$, $\{F_{s, i}\}_{s\in S_{0}}$ is a
discrete and closed family for $X$.

The family $\{F_{s, i}\}_{s\in
S_{0}}$ is disjoint by construction, hence if $x\in I$ then $\{x\}$ is a
neighborhood that intersects $F_{s, i}$ for at most one $s$. If $x\in X\setminus I$ then, using (3.1), $x\in\bigcup_{s\in S_{0}}G_{s}$.
Hence there exists a minimum $s(x)\in S_{0}$ such that $x\in
G_{s(x)}$. Then $G_{s(x)}\cap\mbox{st}(x, \mathcal{W}_{i})$ is an
open neighborhood of $x$. If $s^{\prime}< s(x)$, then $x\in
X-G_{s^{\prime}}$, so we have $$\mbox{st}(x,
\mathcal{W}_{i})\subset\mbox{st}(X-G_{s^{\prime}}, \mathcal{W}_{i})
\mbox{~and~} \mbox{st}(x, \mathcal{W}_{i})\cap F_{s^{\prime},
i}=\emptyset;$$ If $s^{\prime}> s(x)$, then $G_{s(x)}\cap
F_{s^{\prime}, i}=\emptyset$, so there is only one member of
$\{F_{s, i}\}_{s\in S_{0}}$ which meets $G_{s(x)}\cap\mbox{st}(x,
\mathcal{W}_{i})$. Hence $\{F_{s, i}\}_{s\in S_{0}}$ is a discrete
and closed family for $X$.

$X$ is collectionwise
normal since monotonically normal spaces are collectionwise normal \cite{HLZ}.
For every $F_{s, i}$, there exists an open subset $G_{s, i}$ such
that $F_{s, i}\subset G_{s, i}\subset G_{s}$ and $\{G_{s, i}\}_{s\in
S_{0}}$ is a discrete family. Let
$$\mathcal{B}_{i}=\{G_{s, i}\}_{s\in S_{0}} \cup \{\{x\}: x\in
I-\bigcup_{s\in S_{0}}G_{s, i}\}.$$ Then
$\bigcup_{i\in\mathbb{N}}\mathcal{B}_{i}$ is a $\sigma$-locally
finite open cover for $X$ and refines $\{G_{s}\}_{s\in S}$. Since
$X$ is regular, $X$ is paracompact.
\end{proof}

Next we shall prove the main theorems in this section.

\begin{theorem}\label{t0}
A space $X$ has a regular base at non-isolated points if and only if
$X$ has a strong development at non-isolated points.
\end{theorem}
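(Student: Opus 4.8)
The plan is to prove both implications separately, using Lemma~\ref{l2} for the direction ``regular base $\Rightarrow$ strong development'' and the paracompactness supplied by Lemma~\ref{l3} for the converse. For the forward direction I would take the decomposition of Lemma~\ref{l2} and set $\mathcal{W}_{i}=\mathcal{B}_{i}\cup\mathcal{I}(X)$. First I must check these are open covers: each $\mathcal{B}_{i}$ consists of basic open sets, and by Lemma~\ref{l1} the family $(\mathcal{B}-\bigcup_{j<i}\mathcal{B}_{j})\cup\mathcal{I}(X)$ is again a regular base at non-isolated points (its removed part being point-finite at non-isolated points), so by Lemma~\ref{l0} its maximal part $\mathcal{B}_{i}$ covers $X-I$, whence $\mathcal{W}_{i}$ covers $X$ and $\mathcal{I}(X)\subset\bigcup_{i}\mathcal{W}_{i}$.

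To verify the strong development condition, fix a non-isolated $x$ and a neighborhood $U$. Regularity at $x$ yields an open $V_{0}$ with $x\in V_{0}\subset U$ and a finite family $\mathcal{F}=\{B\in\mathcal{B}:B\cap V_{0}\neq\emptyset,\ B\not\subset U\}$. The key observation is that $\mathcal{F}$ contains no singleton from $\mathcal{I}(X)$, since any singleton meeting $V_{0}\subset U$ lies in $U$; hence by Lemma~\ref{l2} each member of $\mathcal{F}$ occurs in exactly one $\mathcal{B}_{j}$. Choosing $i_{0}$ larger than all these finitely many indices gives $\mathcal{B}_{i_{0}}\cap\mathcal{F}=\emptyset$, and then any $B\in\mathcal{B}_{i_{0}}$ meeting $V_{0}$ must satisfy $B\subset U$ (otherwise $B\in\mathcal{F}$); together with the fact that singletons meeting $V_{0}$ lie in $U$, this gives $\mbox{st}(V_{0},\mathcal{W}_{i_{0}})\subset U$, as required.

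For the converse I would first argue that one may assume $\mathcal{W}_{i+1}$ refines $\mathcal{W}_{i}$ and $\mathcal{I}(X)\subset\mathcal{W}_{i}$ for every $i$, by passing to common refinements (which only shrinks stars) and adjoining $\mathcal{I}(X)$ as permitted by the remark following Definition~\ref{d0}. Since $X$ is paracompact by Lemma~\ref{l3}, choose for each $i$ a locally finite open refinement $\mathcal{V}_{i}$ of $\mathcal{W}_{i}$ and put $\mathcal{B}=\bigcup_{i}\mathcal{V}_{i}\cup\mathcal{I}(X)$. That $\mathcal{B}$ is a base follows directly from the strong development: given non-isolated $x\in U$, pick $V$ and $i$ with $\mbox{st}(V,\mathcal{W}_{i})\subset U$; any $\mathcal{V}_{i}$-member containing $x$ sits inside a $\mathcal{W}_{i}$-member meeting $V$, hence inside $U$.

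The main obstacle is checking that $\mathcal{B}$ is regular at non-isolated points, since $\mathcal{B}$ is a union of infinitely many locally finite families and need not itself be locally finite. The decisive point is that high levels are automatically controlled: taking $V_{0}\ni x$, $i_{0}$ with $\mbox{st}(V_{0},\mathcal{W}_{i_{0}})\subset U$, any member of $\mathcal{V}_{i}$ with $i\geq i_{0}$ that meets $V_{0}$ lies in a $\mathcal{W}_{i}$-member, hence (by nesting) in a $\mathcal{W}_{i_{0}}$-member meeting $V_{0}$, hence in $U$; likewise singletons meeting $V_{0}\subset U$ lie in $U$. Thus only the finitely many levels $i<i_{0}$ can contribute sets that meet a neighborhood of $x$ yet escape $U$, and intersecting $V_{0}$ with neighborhoods witnessing the local finiteness of $\mathcal{V}_{1},\dots,\mathcal{V}_{i_{0}-1}$ produces an open $V\subset U$ meeting only finitely many members of $\mathcal{B}$ outside $U$. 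This establishes regularity at $x$ and completes the proof.
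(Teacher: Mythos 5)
Your proof is correct and follows essentially the same route as the paper: the forward direction uses the decomposition of Lemma~\ref{l2} with $\mathcal{W}_{i}=\mathcal{B}_{i}\cup\mathcal{I}(X)$ and the finiteness witness from regularity, and the converse uses paracompactness from Lemma~\ref{l3} to take locally finite open refinements, splitting the levels into the finitely many low ones (handled by local finiteness) and the high ones (absorbed into $U$ by the star condition). The only cosmetic difference is that you arrange the nesting on the covers $\mathcal{W}_{i}$ while the paper arranges it on the refinements $\mathcal{B}_{i}$; your extra checks (that each $\mathcal{W}_{i}$ covers $X$ via Lemmas~\ref{l0} and~\ref{l1}, and that the finite exceptional family contains no singletons) are details the paper leaves implicit.
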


\begin{proof}
Necessity. Since $X$ has a regular base at non-isolated points , $X$
has a regular base at non-isolated points
$\mathcal{B}=(\bigcup_{i\in\mathbb{N}}\mathcal{B}_{i})\cup\mathcal{I}(X)$
satisfying Lemma~\ref{l2}, where $\mathcal{B}_{i}$ is locally finite at
non-isolated points and $\mathcal{B}_{i+1}\cup\mathcal{I}(X)$
refines $\mathcal{B}_{i}\cup\mathcal{I}(X)$ for every
$i\in\mathbb{N}$. Put
$\mathcal{W}_{i}=\mathcal{B}_{i}\cup\mathcal{I}(X)$. We will show that
$\{\mathcal{W}_{i}\}_{i\in\mathbb{N}}$ is a strong development at
non-isolated points for $X$. In fact, for every $x\in X-I$ and each
open neighborhood $U$ of $x$, since $\mathcal{B}$ is regular at
non-isolated points , there exists an open neighborhood $V\subset U$
of $x$ such that the set of all members of $\mathcal{B}$ that meet both $V$
and $X-U$ is finite. We can denote these finite elements by $B_{1},
B_{2}, \cdots ,B_{k}$. Then there exists a $j\in\mathbb{N}$ such
that $\mathcal{B}_{j}\cap\{B_{i}: i\leq k\}=\emptyset$. Hence
$\mbox{st}(V, \mathcal{W}_{j})\subset U$.

Sufficiency. Let $\{\mathcal{W}_{i}\}_{i\in\mathbb{N}}$ be a strong
development at non-isolated points for $X$. By Lemma~\ref{l3},
$X$ is paracompact. For every $i\in\mathbb{N}$, let
$\mathcal{B}_{i}$ be a locally finite open refinement for
$\mathcal{W}_{i}$. Without loss of generality, we may assume
$\mathcal{B}_{i+1}$ refines $\mathcal{B}_{i}$ for every
$i\in\mathbb{N}$. We next prove that
$\mathcal{B}=(\bigcup_{i\in\mathbb{N}}\mathcal{B}_{i})\cup\mathcal{I}(X)$
is a regular base at non-isolated points for $X$. Obviously
$\mathcal{B}$ is a base for $X$. For every $x\in X-I$ and each open
neighborhood $U$ of $x$, there exist an open neighborhood $V$ of $x$
and an $i\in\mathbb{N}$ such that $\mbox{st}(V,
\mathcal{W}_{i})\subset U$. If $j\geq i$, then $$\mbox{st}(V,
\mathcal{B}_{j})\subset\mbox{st}(V,
\mathcal{B}_{i})\subset\mbox{st}(V, \mathcal{W}_{i})\subset U.$$
However, since each $\mathcal{B}_{j}$ is
locally finite, there exists an open neighborhood $W(x)$ of $x$ such
that the set of all members of $\bigcup_{j<i}\mathcal{B}_{j}$ that meet
$W(x)$ is finite. Let$V_{1}=V\cap W(x)$. Then the set of all members
of $\mathcal{B}$ that meet $V_{1}$ and $X-U$ is finite.
\end{proof}

Similar
to definition~\ref{d0}, we say a space $X$ has a {\it
development at non-isolated points} \cite{LL} if there exists a
sequence $\{\mathcal{W}_{i}\}_{i\in\mathbb{N}}$ of open covers for
$X$ such that, for every $x\in X-I$ and each open neighborhood $U$
of $x$, there exist an open neighborhood $V$ of $x$ and an $i\in
\mathbb{N}$ such that $\mbox{st}(V,\mathcal{W}_{i})\subset U$.

\begin{theorem}\label{t1}
A space $X$ has a regular base at non-isolated points if and only if
$X$ is a $T_{2}$-paracompact space with a development at
non-isolated points.
\end{theorem}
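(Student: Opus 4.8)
The plan is to reduce the whole statement to two results already in hand: Theorem~\ref{t0}, which identifies ``regular base at non-isolated points'' with ``strong development at non-isolated points'', and Lemma~\ref{l3}, which hands us paracompactness and monotone normality for free. Thus the only genuine content of the theorem is the equivalence, for a space carrying a development at non-isolated points, between being $T_2$-paracompact and admitting a \emph{strong} development at non-isolated points.

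The necessity direction is then essentially bookkeeping. Suppose $X$ has a regular base at non-isolated points. By Theorem~\ref{t0} it has a strong development at non-isolated points $\{\mathcal{W}_{i}\}_{i\in\mathbb{N}}$, and by Lemma~\ref{l3} it is monotonically normal and paracompact. Since $X$ is $T_{1}$ and monotone normality separates distinct points (condition (iii) above yields the disjoint open sets $H(p,\{q\})$ and $H(q,\{p\})$), $X$ is $T_{2}$, hence $T_{2}$-paracompact. Finally, this same $\{\mathcal{W}_{i}\}$ is already a development at non-isolated points, because $x\in V$ forces $\mbox{st}(x,\mathcal{W}_{i})\subset\mbox{st}(V,\mathcal{W}_{i})\subset U$.

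The substantive direction is sufficiency. Given a $T_{2}$-paracompact $X$ with a development at non-isolated points $\{\mathcal{W}_{i}\}_{i\in\mathbb{N}}$, I would manufacture a strong development at non-isolated points and then quote Theorem~\ref{t0}. The key tool is that a $T_{2}$-paracompact space is fully normal (Stone), so every open cover has an open star-refinement. For each $i$ choose an open cover $\mathcal{V}_{i}$ star-refining $\mathcal{W}_{i}$, and adjoin the singletons $\mathcal I(X)$ to each $\mathcal{V}_{i}$ so that $\mathcal I(X)\subset\bigcup_{i}\mathcal{V}_{i}$. I then claim $\{\mathcal{V}_{i}\}_{i\in\mathbb{N}}$ is a strong development at non-isolated points. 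Indeed, fix $x\in X-I$ and an open neighbourhood $U$; the development property supplies a neighbourhood $V'$ and an $i$ with $\mbox{st}(V',\mathcal{W}_{i})\subset U$, so in particular $\mbox{st}(x,\mathcal{W}_{i})\subset U$. Picking $V_{0}\in\mathcal{V}_{i}$ with $x\in V_{0}$, the star-refinement gives $W\in\mathcal{W}_{i}$ with $\mbox{st}(V_{0},\mathcal{V}_{i})\subset W$; since $x\in V_{0}\subset W$ we get $W\subset\mbox{st}(x,\mathcal{W}_{i})\subset U$, whence $\mbox{st}(V_{0},\mathcal{V}_{i})\subset U$. Taking $V=V_{0}$ verifies the strong development condition, and the adjoined singletons do no harm since $V_{0}\cap I\subset V_{0}\subset U$. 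Theorem~\ref{t0} then returns a regular base at non-isolated points.

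The main obstacle, and the only place where $T_{2}$-paracompactness is genuinely used, is the passage from the \emph{point}-star estimate $\mbox{st}(x,\mathcal{W}_{i})\subset U$ to the \emph{set}-star estimate $\mbox{st}(V,\mathcal{W}_{i})\subset U$; this is exactly what a star-refinement buys, so the crux is to invoke full normality correctly and to confirm that inserting the isolated-point singletons preserves the star bound. I would also attend to the mild bookkeeping required to feed the output into Theorem~\ref{t0} (for instance replacing $\{\mathcal{V}_{i}\}$ by successive common refinements if a decreasing sequence of covers is desired), but that is routine and leaves the star computation above untouched.
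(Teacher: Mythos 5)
Your proof is correct and follows essentially the same route as the paper: necessity via Theorem~\ref{t0} and Lemma~\ref{l3}, and sufficiency by using full normality of a $T_2$-paracompact space to star-refine the covers $\mathcal{W}_i$ and thereby upgrade the point-star estimate to the set-star estimate required for a strong development at non-isolated points, then quoting Theorem~\ref{t0}. The only cosmetic difference is that the paper builds a single nested chain with $\mathcal{B}_{i+1}$ star-refining $\mathcal{B}_i\wedge\mathcal{W}_{i+1}$, whereas you star-refine each $\mathcal{W}_i$ separately and defer the nesting to routine common refinements.
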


\begin{proof}
Necessity. By Lemma~\ref{l3} and Theorem~\ref{t0}, if $X$ has a regular base
at non-isolated points, then $X$ is a $\mbox{T}_{2}$-paracompact
space with a development at non-isolated points.

Sufficiency. Let X be a $\mbox{T}_{2}$-paracompact space with a development $\{\mathcal{W}_{i}\}_{i\in\mathbb{N}}$
at non-isolated points. Since $X$ is a $T_{2}$-paracompact
space, there exists a sequence of open covers
$\{\mathcal{B}_{i}\}_{i\in\mathbb{N}}$ for $X$ such that
$\mathcal{B}_{i+1}$ is a star refinement of
$\mathcal{B}_{i}\wedge\mathcal{W}_{i+1}$ for every $i\in\mathbb{N}$.
We next prove that $\{\mathcal{B}_{i}\}_{i\in\mathbb{N}}$ is a
strong development at non-isolated points for $X$. For every $x\in
X-I$ and every open neighborhood $U$ of $x$, there exists an
$i\in\mathbb{N}$ such that $\mbox{st}(x, \mathcal{W}_{i})\subset U$.
Choose a $V\in\mathcal{B}_{i+1}$ such that $x\in V$. Then
$$\mbox{st}(V,
\mathcal{B}_{i+1})\subset\mbox{st}(x,\mathcal{B}_{i})\subset\mbox{st}(x,
\mathcal{W}_{i})\subset U.$$ By Theorem~\ref{t0}, $X$ has a regular base
at non-isolated points.
\end{proof}

\noindent{\bf Remark} We cannot omit the condition
``$\mbox{T}_{2}$'' in Theorem~\ref{t1}. In fact, let $X$ be the finite
complement topology on $\mathbb{N}$. Then $X$ is a
$\mbox{T}_{1}$-compact and developable space, but it is not a
$\mbox{T}_{2}$-space.

The following corollary is a complement for Lemma~\ref{l2}.

\begin{corollary}
A space $X$ has a regular base at non-isolated points if and only if
$X$ is a regular space with a development at non-isolated points
$\{\mathcal{B}_{i}\cup\mathcal{I}(X)\}_{i\in\mathbb{N}}$, where
$\mathcal{B}_{i}$ is locally finite at non-isolated points for every
$i\in \mathbb{N}$.
\end{corollary}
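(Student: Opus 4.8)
The plan is to prove each direction by reducing to the characterizations already in hand, namely Theorem~\ref{t0} (regular base at non-isolated points $\iff$ strong development at non-isolated points) and Theorem~\ref{t1} (regular base at non-isolated points $\iff$ $T_2$-paracompact with a development at non-isolated points), so that essentially no construction has to be redone from scratch.

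For necessity I would start from Lemma~\ref{l2}, writing the base as $(\bigcup_i\mathcal B_i)\cup\mathcal I(X)$ with each $\mathcal B_i$ locally finite at non-isolated points and $\mathcal B_{i+1}\cup\mathcal I(X)$ refining $\mathcal B_i\cup\mathcal I(X)$. The necessity argument inside the proof of Theorem~\ref{t0} shows precisely that, with $\mathcal W_i=\mathcal B_i\cup\mathcal I(X)$, the sequence $\{\mathcal W_i\}_i$ is a strong development at non-isolated points; and any strong development at non-isolated points is a development at non-isolated points, since $x\in V$ gives $\mathrm{st}(x,\mathcal W_i)\subseteq\mathrm{st}(V,\mathcal W_i)$. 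Thus $\{\mathcal B_i\cup\mathcal I(X)\}_i$ is a development of exactly the required form. Finally, by Lemma~\ref{l3} (via Theorem~\ref{t0}) $X$ is monotonically normal and paracompact, hence regular, which supplies the remaining assertion.

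For sufficiency, assume $X$ is regular and carries a development at non-isolated points $\{\mathcal W_i=\mathcal B_i\cup\mathcal I(X)\}_i$ with each $\mathcal B_i$ locally finite at non-isolated points. Since $\{\mathrm{st}(x,\mathcal W_i)\}_i$ is a neighbourhood base at every non-isolated $x$, the family $\mathcal B=(\bigcup_i\mathcal B_i)\cup\mathcal I(X)$ is a base for $X$. I would \emph{not} try to verify the regular-base condition on $\mathcal B$ directly, because a development need not be strong; instead the aim is to recover the missing global regularity by proving that $X$ is $T_2$-paracompact and then quoting Theorem~\ref{t1} (being regular and $T_1$, $X$ is $T_2$, and a development at non-isolated points is already given). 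To obtain paracompactness I would use the regular-space closure trick: for an open cover $\mathcal U$ put $\mathcal C_i=\{B\in\mathcal B_i:\overline B\subseteq U\text{ for some }U\in\mathcal U\}$. Regularity together with the base property at non-isolated points makes $\bigcup_i\mathcal C_i$ cover $X-I$ and refine $\mathcal U$, while $\mathcal I(X)$ covers and refines $\mathcal U$ over $I$; each $\mathcal C_i$ is locally finite at non-isolated points. Since a regular space in which every open cover admits a $\sigma$-locally finite open refinement is paracompact, this would finish the argument once genuine $\sigma$-local finiteness is secured.

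The main obstacle is exactly the interface between the families $\mathcal C_i$, which are locally finite only \emph{at non-isolated points}, and the set $I$ of isolated points, which may accumulate at a non-isolated point (as on the Michael line) and may lie in infinitely many members of $\bigcup_i\mathcal C_i$. One must therefore turn $\mathcal I(X)\cup\bigcup_i\mathcal C_i$ into a refinement that is locally finite everywhere, not merely at non-isolated points. I expect to do this by exploiting that each isolated point is a clopen singleton: assign every isolated point solely to its own member of $\mathcal I(X)$ and trim the $\mathcal C_i$ accordingly, in the spirit of the way the covers $\mathcal B_i$ are assembled in the proof of Lemma~\ref{l3}(3). With this discretization in place one gets a $\sigma$-locally finite open refinement, hence paracompactness, and Theorem~\ref{t1} yields a regular base at non-isolated points; equivalently, the same repaired data can be packaged as a strong development at non-isolated points and fed into Theorem~\ref{t0}.
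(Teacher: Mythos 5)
Your overall architecture coincides with the paper's: necessity is read off from Lemma~\ref{l2}, the necessity half of Theorem~\ref{t0} and Lemma~\ref{l3}, and sufficiency is reduced to proving paracompactness and invoking Theorem~\ref{t1}. That part is fine. The problem is that the step you yourself flag as ``the main obstacle'' is precisely the step that carries the entire sufficiency direction, and the fix you gesture at does not work as stated. If every isolated point is assigned its own member of $\mathcal{I}(X)$, the refinement contains all of $\{\{x\}:x\in I\}$, and that family need not be $\sigma$-locally finite: on the Michael line of Example~\ref{e0} a subfamily $\{\{x\}:x\in D\}$ with $D\subset B$ is locally finite only when $D$ is closed and discrete, which for a Bernstein set forces $D$ to be countable, so $\{\{x\}:x\in B\}$ is not a countable union of locally finite families. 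The assembly you cite from the proof of Lemma~\ref{l3}(3) (singletons of the isolated points not yet covered at stage $i$) has the same defect, since those leftover sets of isolated points need not be closed.

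The paper closes the gap with two concrete devices that are absent from your sketch. First, for each $i$ it passes to $U_{i}=\{x\in X:\mathcal{B}_{i}\mbox{~is locally finite at~}x\}$; since $X-I\subset U_{i}$, the complement $X-U_{i}$ consists of isolated points and is therefore open, so $U_{i}$ is clopen and $\mathcal{B}_{i}|U_{i}=\{B\cap U_{i}:B\in\mathcal{B}_{i}\}$ is locally finite \emph{everywhere} --- whereas your $\mathcal{C}_{i}$ may fail to be even point-finite at isolated points. Second, singletons are added only for the points of $F=\bigcap_{i}(X-V_{i})$, where $V_{i}$ is the union of the stage-$i$ refinement: $F$ is an intersection of closed sets, hence closed, and is contained in $I$ (every non-isolated point lands in some $V_{n}$), hence $F$ is closed and discrete and $\{\{x\}:x\in F\}$ is a discrete family. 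Only with both devices does one obtain a genuinely $\sigma$-locally finite open refinement, after which regularity gives paracompactness and Theorem~\ref{t1} finishes. Your necessity argument needs no repair.
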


\begin{proof}
Necessity. It is easy to see by the proof of necessity in Theorems~\ref{t0} and~\ref{t1}.

Sufficiency. Let $X$ be a regular space with a development at
non-isolated points
$\{\mathcal{B}_{i}\cup\mathcal{I}(X)\}_{i\in\mathbb{N}}$, where
$\mathcal{B}_{i}$ is locally finite at non-isolated points for every
$i\in \mathbb{N}$. For each $i\in\mathbb{N}$, let $$U_{i}=\{x\in
X:\mathcal{B}_{i}\mbox{~is locally finite at point~}x\}.$$ Then
$U_{i}$ is an open subset and $\mathcal{B}_{i}$ is locally finite at
each point of $U_{i}$. Since $X-I\subset U_{i}$, $X-U_{i}\subset I$
and $X-U_{i}$ is an open subset for $X$. Hence $U_{i}$ is an open
and closed subset for $X$. Thus $\mathcal{B}_{i}|U_{i}=\{B\cap
U_{i}: B\in\mathcal{B}_{i}\}$ is an open and locally finite family.

By Theorem~\ref{t1}, we only need to prove that $X$ is a paracompact
space. In fact, for every open cover $\mathcal{U}$ of $X$ and each
$i\in\mathbb{N}$, let
$$\mathcal{V}_{i}=\{B\cap U_{i}: B\in\mathcal{B}_{i}\mbox{~and there exists
an~}U\in\mathcal{U}\mbox{~such that~}B\subset U\}$$ and $$V_{i}=\cup\mathcal{V}_{i}.$$ Put
$$\mathcal{V}=(\bigcup_{i\in\mathbb{N}}\mathcal{V}_{i})\cup\{\{x\}:
x\in F\}, \mbox{~where~} F=\bigcap_{i\in\mathbb{N}}(X-V_{i}).$$ Then
$\mathcal{V}$ is a cover for $X$ and $F\subset I$. In fact, if $x\in
X-I$, then there exists an $U\in\mathcal{U}$ such that $x\in U$.
Hence there exists an $n\in\mathbb{N}$ such that $\mbox{st}(x,
\mathcal{B}_{n})\subset U$. Fix a $B\in\mathcal{B}_{n}$ such that
$x\in B$. Then $B\subset U$ and $x\in B\cap
U_{n}\in\mathcal{V}_{n}$. So $x\in V_{n}$. Then $F$ is a closed and
discrete subset for $X$. Hence $\mathcal{V}$ is an open $\sigma$-locally
finite cover and refines $\mathcal{U}$. By the regularity, $X$ is a
paracompact space.
\end{proof}

\begin{example}
There exists a non-regular $\mbox{T}_{2}$-space with a development
at non-isolated points.
\end{example}

Let $\mathbb{Q}$, $\mathbb{P}$ denote the rational numbers and the
irrational numbers, respectively. Let $X=\mathbb{R}$ and endow $X$ with
the following topology \cite{BeL}: every point of $\mathbb{P}$ is an
isolated point; every point $x\in\mathbb{Q}$ has neighborhoods of the
following form:$$B(x, n)=\{x\}\cup\{y\in\mathbb{P}: |y-x|<1/n\},
n\in\mathbb{N}.$$ Then $X$ is a non-regular $\mbox{T}_{2}$-space and
the isolated points set of $X$ is $\mathbb{P}$. We denote
$\mathbb{Q}=\{q_{m}: m\in\mathbb{N}\}$. For any $n,m\in\mathbb{N}$,
let $$\mathcal{B}_{n, m}=\{B(q_{m}, n), \mathbb{R}-\{q_{m}\}\},$$
Then $\mathcal{B}_{n, m}$ is a finite open cover for $X$, and
$\mbox{st}(q_{m}, \mathcal{B}_{n, m}\cup\mathcal{I}(X))=B(q_{m},
n)$. Hence $\{\mathcal{B}_{n,
m}\cup\mathcal{I}(X)\}_{n,m\in\mathbb{N}}$ is a development at
non-isolated points for $X$ and $\mathcal{B}_{n, m}$ is locally
finite for any $n,m\in\mathbb{N}$.

\vskip 0.5cm
\section{Metrization Theorems}
In this section we shall discuss the metrization problems on spaces
with the properties of bases at non-isolated points.

$X$ is called a {\it perfect space} if every open subset of $X$ is
an $F_{\sigma}$-set in $X$.

\begin{theorem}\label{t3}
Let $X$ be a space. Then the following are equivalent:
\begin{enumerate}
\item $X$ is metrizable;

\item $X$ is a perfect space with a regular base at non-isolated
points;

\item $X$ is a perfect space with a strong development at non-isolated
points.
\end{enumerate}
\end{theorem}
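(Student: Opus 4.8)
The plan is to prove the cycle $(1)\Rightarrow(2)\Rightarrow(3)\Rightarrow(1)$, since the three conditions are naturally chained through the earlier results. The implication $(2)\Leftrightarrow(3)$ is essentially free: by Theorem~\ref{t0}, a space has a regular base at non-isolated points if and only if it has a strong development at non-isolated points, so conditions $(2)$ and $(3)$ are equivalent once we know the ambient space is the same. Thus the real content lies in $(1)\Rightarrow(2)$ and in the hard direction $(2)\Rightarrow(1)$ (equivalently $(3)\Rightarrow(1)$), and I would organize the write-up as $(1)\Rightarrow(2)\Rightarrow(1)$ with the remark that $(2)\Leftrightarrow(3)$ follows from Theorem~\ref{t0}.

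For $(1)\Rightarrow(2)$, I would argue that a metrizable space is perfect (every open set in a metric space is an $F_\sigma$, since $U=\bigcup_n\{x:d(x,X\setminus U)\geq 1/n\}$) and, by the classical Theorem~\ref{t2}, has a regular base, hence in particular a regular base at non-isolated points. So this direction is immediate from standard facts plus Theorem~\ref{t2}.

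The substance is $(2)\Rightarrow(1)$. Here I would start from a space $X$ that is perfect and has a strong development at non-isolated points (using the equivalence with $(3)$). By Lemma~\ref{l3}, $X$ is already monotonically normal and paracompact, so in particular regular; the goal is to upgrade ``development at non-isolated points'' to a genuine development (or to a $\sigma$-locally finite base) by exploiting perfectness to handle the isolated points. The key idea is that the isolated-point set $I=I(X)$ is open, and by perfectness its complement $X-I$ is an $F_\sigma$, while $I$ itself, being open, is also an $F_\sigma$; more usefully, in a perfect space every open set is $F_\sigma$, so the singletons $\{x\}$ for $x\in I$ can be distributed into countably many discrete (hence locally finite) families. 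Concretely, I would take the strong development $\{\mathcal{W}_i\}$ at non-isolated points, refine each $\mathcal{W}_i$ to a locally-finite open family covering $X-I$ (paracompactness gives this), and then write $I$ as a countable union $I=\bigcup_n I_n$ with each $I_n$ closed in $X$, using perfectness on the open set $I$. Adjoining the discrete families $\{\{x\}:x\in I_n\}$ to the refinements produces a $\sigma$-locally finite base for all of $X$, at which point Theorem~\ref{t2}(3) yields metrizability.

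The main obstacle I anticipate is precisely the bookkeeping at the isolated points: a strong development at non-isolated points says nothing about separating or developing near points of $I$, and a priori the family $\{\{x\}:x\in I\}$ need not be locally finite (it is only point-finite, indeed point-disjoint). Perfectness is the hypothesis that rescues this, since it forces the open set $I$ to be $F_\sigma$ and thus lets me decompose the isolated points into countably many closed-in-$X$ pieces, each of which is discrete and therefore locally finite. Making this decomposition mesh with the locally-finite refinements of the $\mathcal{W}_i$ so that the union is simultaneously a base, $\sigma$-locally finite, and covers every point of $X$ is where I would expect to spend the most care; once that $\sigma$-locally finite base is in hand, invoking the classical metrization theorem finishes the argument.
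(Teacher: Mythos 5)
Your proposal is correct, and the easy directions ($(1)\Rightarrow(2)$ via Theorem~\ref{t2}, and $(2)\Leftrightarrow(3)$ via Theorem~\ref{t0}) match the paper exactly; but your argument for the hard implication $(3)\Rightarrow(1)$ takes a genuinely different route. The paper never passes through paracompactness or $\sigma$-locally finite bases: it applies perfectness to the \emph{closed} set $X-I$, writing $X-I=\bigcap_{n}G_{n}$ with each $G_{n}$ open, forms the open covers $\mathcal{U}_{n}=\{G_{n}\}\cup\{\{x\}:x\in I-G_{n}\}$, and interleaves these with the given covers $\mathcal{W}_{n}$ to produce a strong development for \emph{all} of $X$ (at an isolated point $x$ one picks $n$ with $x\notin G_{n}$, so that $\mathrm{st}(\{x\},\mathcal{U}_{n})=\{x\}$); metrizability then follows from the classical strong-development metrization theorem \cite[Theorem 5.4.2]{ER}. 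You instead invoke Lemma~\ref{l3} to get paracompactness and regularity, take locally finite open refinements of the $\mathcal{W}_{i}$, use perfectness on the \emph{open} set $I$ to write $I=\bigcup_{n}I_{n}$ with each $I_{n}$ closed (so that each $\{\{x\}:x\in I_{n}\}$ is discrete), and assemble a $\sigma$-locally finite base to which Theorem~\ref{t2}(3) applies; all of these steps check out, including the verification that the refinements form a base at non-isolated points. The paper's interleaving argument is shorter and needs nothing beyond the definition of a strong development plus one citation; your argument is slightly heavier (it leans on Lemma~\ref{l3}) but makes explicit the connection to the Nagata--Smirnov--Bing criterion and meshes well with the paper's subsequent lemma on bases that are $\sigma$-locally finite at non-isolated points. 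Both routes also make visible the same refinement noted in the paper's remark: ``perfect'' is only used to make $I(X)$ an $F_{\sigma}$-set.
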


\begin{proof}
By Theorems~\ref{t2} and~\ref{t0}, we only need to prove $(3)\Rightarrow(1)$.

Let $X$ be a perfect space with a strong development at the
non-isolated points $\{\mathcal{W}_{i}\}_{i\in\mathbb{N}}$ of $X$.
Then there exists a sequence of open sets
$\{G_{n}\}_{n\in\mathbb{N}}$ such that
$X-I=\bigcap_{n=1}^{\infty}G_{n}$. For every $n\in\mathbb{N}$, let
$\mathcal{U}_{n}=\{G_{n}\}\cup\{\{x\}:x\in I-G_{n}\}$. Then
$\{\mathcal{U}_{n}\}_{n\in\mathbb{N}}$ is a sequence of open covers
for $X$.
Put $\mathcal{V}_{2n-1}=\mathcal{W}_{n}$ and $\mathcal{V}_{2n}=\mathcal{U}_{n}$, for each $n\in \mathbb{N}$.
Then $\{\mathcal{V}_{n}\}_{n\in\mathbb{N}}$ is a strong development for
$X$, and $X$ is metrizable by \cite[Theorem 5.4.2]{ER}.
\end{proof}

\noindent{\bf Remark} By Example~\ref{e0}, we see the condition ``$X$
is perfect'' in (2) and (3) of Theorem~\ref{t3} cannot be omitted, although clearly it can be
replaced with the condition that $I(X)$ is an $F_{\sigma}$-set.

\begin{definition}
Let $\mathcal{B}=\bigcup_{i\in\mathbb{N}}\mathcal{B}_{i}$ be a base
for space $X$. $\mathcal{B}$ is called {\it $\sigma$-locally finite
at non-isolated points}, if for every $i\in\mathbb{N}$,
$\mathcal{B}_{i}$ is locally finite at non-isolated points for
$X$µÄ.
\end{definition}

Similarly, we can  define the notion of spaces with a {\it
$\sigma$-discrete base at non-isolated points}.

\begin{definition}
Let $\mathcal{B}$ be a family of subsets of $X$. For every $x\in X$,
$\mathcal{B}$ is called {\it hereditarily closure-preserving at $x$}
if, for any $H(B)\subset B\in\mathcal{B}$,
$x\in\overline{\cup\{H(B): B\in\mathcal{B}\}}$, then
$x\in\cup\{\overline{H(B)}: B\in\mathcal{B}\}$. $\mathcal{B}$ is
called {\it a hereditarily closure-preserving collection} for $X$
if, for every $x\in X$, $\mathcal{B}$ is hereditarily
closure-preserving at $x$.
\end{definition}

It is easy to verify
that a collection is hereditarily closure preserving if and only if it is hereditarily
closure preserving at non-isolated points.

\begin{lemma}\label{l4}
Let $\mathcal{B}$ be locally finite at non-isolated points for $X$.
Then $\mathcal{B}$ is hereditarily closure-preserving.
\end{lemma}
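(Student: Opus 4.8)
The plan is to prove that if $\mathcal{B}$ is locally finite at non-isolated points, then $\mathcal{B}$ is hereditarily closure-preserving. Given the remark immediately preceding the lemma, it suffices to show $\mathcal{B}$ is hereditarily closure-preserving at every non-isolated point, since hereditary closure-preservation is equivalent to hereditary closure-preservation at non-isolated points. So I would fix a non-isolated point $x\in X-I$ and an arbitrary assignment $B\mapsto H(B)$ with $H(B)\subset B$ for each $B\in\mathcal{B}$, and aim to verify the defining condition: if $x\in\overline{\bigcup\{H(B):B\in\mathcal{B}\}}$, then $x\in\bigcup\{\overline{H(B)}:B\in\mathcal{B}\}$.

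The key step is to exploit local finiteness at $x$. Since $\mathcal{B}$ is locally finite at the non-isolated point $x$, there is an open neighborhood $W$ of $x$ meeting only finitely many members of $\mathcal{B}$, say those among $B_1,\dots,B_n$ (all other members of $\mathcal{B}$ miss $W$). The idea is that only these finitely many sets can contribute to the closure of $\bigcup H(B)$ near $x$. Concretely, for any $B\in\mathcal{B}$ with $B\cap W=\emptyset$ we have $H(B)\cap W=\emptyset$ as well, since $H(B)\subset B$. Therefore $W\cap\bigcup\{H(B):B\in\mathcal{B}\}=W\cap\bigcup_{k=1}^{n}H(B_k)$, so locally around $x$ the big union agrees with the finite union $\bigcup_{k=1}^n H(B_k)$.

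From here the argument reduces to the finite case, which is routine. If $x\in\overline{\bigcup\{H(B):B\in\mathcal{B}\}}$, then since $W$ is a neighborhood of $x$, every neighborhood of $x$ meets $W\cap\bigcup\{H(B):B\in\mathcal{B}\}=W\cap\bigcup_{k=1}^n H(B_k)$, whence $x\in\overline{\bigcup_{k=1}^n H(B_k)}=\bigcup_{k=1}^n\overline{H(B_k)}$, using that a finite union of sets has closure equal to the union of the closures. Thus $x\in\overline{H(B_k)}$ for some $k\leq n$, and in particular $x\in\bigcup\{\overline{H(B)}:B\in\mathcal{B}\}$, as required. I do not anticipate a serious obstacle here; the only point demanding care is the reduction step, making sure that members of $\mathcal{B}$ disjoint from $W$ genuinely cannot contribute to the closure of the union inside the neighborhood $W$, which is exactly where the inclusion $H(B)\subset B$ and local finiteness at $x$ are used together.
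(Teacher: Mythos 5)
Your proof is correct and follows essentially the same route as the paper's: both reduce to a non-isolated point $x$, use local finiteness at $x$ to find a neighborhood meeting only finitely many members of $\mathcal{B}$ (hence only finitely many $H(B)$, since $H(B)\subset B$), and then conclude via the fact that the closure of a finite union is the union of the closures. No gaps.
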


\begin{proof}
Let $\mathcal{B}=\{B_{\alpha}: \alpha\in\Gamma\}$. For every
$\alpha\in\Gamma, \mbox{choose~}H_{\alpha}\subset B_{\alpha}$. We
can assume $x\in X-I$ and denote
$\mathcal{H}=\{H_{\alpha}\}_{\alpha\in\Gamma}$. If
$x\in\overline{\cup\mathcal{H}}$, then there exists an open
neighborhood $U(x)$ of $x$ such that the set of all members of
$\{H_{\alpha}\}_{\alpha\in\Gamma}$ that meet $U(x)$ is finite because
$\{H_{\alpha}\}_{\alpha\in\Gamma}$ is locally finite at non-isolated
points. we denote these finite elements by $H_{\alpha_{1}},
H_{\alpha_{2}},\cdots, H_{\alpha_{n}}$. Since
$$\overline{\cup\mathcal{H}}=\overline{\cup(\mathcal{H}-\{H_{\alpha_{i}}:
i\leq n\})}\cup\overline{\cup\{H_{\alpha_{i}}: i\leq n\}},
\mbox{~and}$$
$$U(x)\cap (\cup(\mathcal{H}-\{H_{\alpha_{i}}: i\leq
n\}))=\emptyset,$$ we have $x\in\overline{\cup\{H_{\alpha_{i}}:
i\leq n\}}$. Hence $x\in\cup\overline{\mathcal{H}}$.
\end{proof}

\begin{lemma}\cite{BD}\label{l5} A regular space $X$ is metrizable if and only if $X$ has
a $\sigma$-hereditarily closure-preserving base.
\end{lemma}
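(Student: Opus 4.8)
The plan is to treat the two implications very asymmetrically, since only one of them carries any real content.

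\emph{Necessity.} This is routine. If $X$ is metrizable then, by the Nagata--Smirnov theorem (the equivalence (1)$\Leftrightarrow$(3) of Theorem~\ref{t2}), $X$ has a $\sigma$-locally finite base $\mathcal{B}=\bigcup_{n}\mathcal{B}_{n}$. A locally finite family is hereditarily closure-preserving --- indeed this holds already under the weaker hypothesis of local finiteness at non-isolated points, which is exactly Lemma~\ref{l4} --- so each $\mathcal{B}_{n}$ is hereditarily closure-preserving and $\mathcal{B}$ is a $\sigma$-hereditarily closure-preserving base. Hence all the work is in the converse.

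\emph{Sufficiency; the engine.} Assume $X$ is regular with a $\sigma$-hereditarily closure-preserving base $\mathcal{B}=\bigcup_{n}\mathcal{B}_{n}$. The single combinatorial tool I would isolate first is the following consequence of closure-preservation. Suppose $\mathcal{F}$ is hereditarily closure-preserving, $x$ is non-isolated, and for the members $F$ of some subfamily of $\mathcal{F}$ one can select points $y_{F}\in F\setminus\{x\}$ with $x\in\overline{\bigcup_{F}\{y_{F}\}}$. Then we reach a contradiction: putting $H(F)=\{y_{F}\}$ we have $\overline{H(F)}=\{y_{F}\}$ since $X$ is $T_{1}$, so $x\in\overline{\bigcup_{F}H(F)}$ while $x\notin\bigcup_{F}\overline{H(F)}$. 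In words, an hereditarily closure-preserving family of (in our case open) sets admits no selection of interior points distinct from $x$ that accumulates at $x$. Every later step is an application of this engine to the levels $\mathcal{B}_{n}$.

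\emph{First countability (the crux).} The hard part is to deduce that every non-isolated point $x$ has countable character, and this is where the base hypothesis is essential. The route I would pursue is a diagonalization: if $x$ had uncountable character, then since a local base at $x$ drawn from $\mathcal{B}$ is partitioned among the countably many levels $\mathcal{B}_{n}$, some level $\mathcal{B}_{n}$ would carry ``enough'' members containing $x$ and shrinking toward $x$; using regularity to choose interior points of these members lying in prescribed neighborhoods of $x$, one manufactures a selection accumulating at $x$, contradicting the engine applied to the hereditarily closure-preserving family $\mathcal{B}_{n}$. Turning ``enough'' into an honest choice --- reconciling the possible cofinalities of the character of $x$ with the cardinalities of the individual levels --- is the genuine obstacle, and it is exactly the delicate bookkeeping that the cited theorem of \cite{BD} settles; I would follow that argument rather than reprove it.

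\emph{From first countability to metrizability.} Once $X$ is first countable the engine runs freely. Fix a non-isolated $x$ with a decreasing countable local base $V_{1}\supseteq V_{2}\supseteq\cdots$. If infinitely many members of some $\mathcal{B}_{n}$ contained $x$, choosing $y_{k}\in B_{k}\cap V_{k}\setminus\{x\}$ from distinct such members gives points converging to $x$, contradicting the engine; so each $\mathcal{B}_{n}$ is point-finite at $x$. If some $\mathcal{B}_{n}$ were not locally finite at $x$, then, discarding the finitely many members containing $x$, every $V_{k}$ still meets infinitely many members, and the same selection again produces points converging to $x$, a contradiction. Hence $\mathcal{B}$ is $\sigma$-locally finite at non-isolated points. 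The isolated points are handled exactly as in the corollary following Theorem~\ref{t1}: the set $U_{n}=\{x:\mathcal{B}_{n}\text{ is locally finite at }x\}$ is open-and-closed with $X\setminus U_{n}\subseteq I$, so restricting each level to $U_{n}$ and adjoining the clopen discrete remainder yields a genuinely $\sigma$-locally finite base. With $X$ regular, the direction (3)$\Rightarrow$(1) of Theorem~\ref{t2} gives that $X$ is metrizable. The only serious difficulty in the entire argument is the first-countability bootstrap of the preceding paragraph.
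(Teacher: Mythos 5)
This lemma is quoted from \cite{BD}; the paper contains no proof of it, so there is no internal argument to compare against. That said, most of your sketch is sound: the necessity direction is routine (Nagata--Smirnov plus the observation of Lemma~\ref{l4} that locally finite families are hereditarily closure-preserving); your ``engine'' is a correct reformulation of hereditary closure-preservation in a $T_1$ space; and the passage from first countability to metrizability is carried out correctly --- point-finiteness and then local finiteness of each level $\mathcal{B}_n$ at non-isolated points both follow from the engine once a countable decreasing local base $\{V_k\}$ at $x$ is available to force the selected points to accumulate at $x$, and the clopen-set device from the corollary to Theorem~\ref{t1} legitimately upgrades local finiteness at non-isolated points to a genuinely $\sigma$-locally finite base, after which Theorem~\ref{t2} applies.

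The genuine gap is the one you flag yourself: the claim that a regular $T_1$ space with a $\sigma$-hereditarily closure-preserving base is first countable is never proved; it is deferred to \cite{BD}, which is the very reference the lemma cites for its entire content, so the appeal is circular as a proof strategy. This step is not a technicality that your engine can absorb: without countable character at $x$ there is no mechanism for making a selection of one point from each member of an infinite subfamily of a single level $\mathcal{B}_n$ accumulate at $x$, so the engine simply does not fire, and the ``delicate bookkeeping'' you allude to (showing that uncountable character forces some one level to carry an infinite subfamily admitting such a bad selection) is the actual mathematical content of the Burke--Engelking--Lutzer theorem. In short: correct skeleton, correct reductions, but the decisive step is asserted rather than proved, so the proposal is not a self-contained proof of the lemma.
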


\begin{lemma} Let $X$ be a regular space. Then the following
conditions are equivalent:
\begin{enumerate}

\item $X$ is metrizable;

\item $X$ has a base which is $\sigma$-discrete  at non-isolated points;

\item $X$  has a base which is $\sigma$-locally finite  at non-isolated
points.
\end{enumerate}
\end{lemma}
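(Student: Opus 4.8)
The plan is to prove the cycle $(1)\Rightarrow(2)\Rightarrow(3)\Rightarrow(1)$, leaning almost entirely on the metrization results already recorded in the excerpt, so that the statement is really a repackaging of Lemma~\ref{l4} together with the Bing-type criterion in Lemma~\ref{l5}.

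For $(1)\Rightarrow(2)$ I would appeal to the classical Bing/Nagata--Smirnov machinery: a metrizable space admits a genuine $\sigma$-discrete base $\mathcal{B}=\bigcup_{i\in\mathbb{N}}\mathcal{B}_{i}$ in which each $\mathcal{B}_{i}$ is discrete in all of $X$. A family that is discrete everywhere is in particular discrete at the non-isolated points of $X$, so such a base is $\sigma$-discrete at non-isolated points, which is exactly (2). The step $(2)\Rightarrow(3)$ is then immediate from the definitions, since a family that is discrete at non-isolated points is locally finite at those points (discreteness is the stronger property, and it persists when the witnessing neighbourhoods are examined only at non-isolated points); hence a $\sigma$-discrete base at non-isolated points is automatically $\sigma$-locally finite at non-isolated points.

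The only substantive implication is $(3)\Rightarrow(1)$, and even this reduces cleanly to earlier lemmas. Starting from a base $\mathcal{B}=\bigcup_{i\in\mathbb{N}}\mathcal{B}_{i}$ in which every $\mathcal{B}_{i}$ is locally finite at non-isolated points, I would apply Lemma~\ref{l4} index by index to conclude that each $\mathcal{B}_{i}$ is hereditarily closure-preserving. Consequently $\mathcal{B}$ is a $\sigma$-hereditarily closure-preserving base for $X$. Since $X$ is assumed regular, Lemma~\ref{l5} delivers that $X$ is metrizable, closing the cycle. The main point requiring attention is simply that Lemma~\ref{l4} is invoked with the correct hypothesis on each $\mathcal{B}_{i}$; beyond that verification there is no genuine obstacle, as the difficult analytic content has already been absorbed into Lemmas~\ref{l4} and~\ref{l5}.
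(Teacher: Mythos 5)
Your proposal is correct and follows essentially the same route as the paper, whose proof simply cites the classical metrization theorem for $(1)\Rightarrow(2)$, takes $(2)\Rightarrow(3)$ as immediate, and closes $(3)\Rightarrow(1)$ by combining Lemma~\ref{l4} (applied to each $\mathcal{B}_{i}$) with the Burke--Engelking--Lutzer criterion of Lemma~\ref{l5}. No discrepancies to report.
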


\begin{proof}
It is easy to see by Theorem~\ref{t2}, Lemmas~\ref{l4} and~\ref{l5}
\end{proof}

Let $X$ be a topological space and $\tau(X)$ its topology. $g:\mathbb{N}\times X\rightarrow
\tau (X)$ is called a $g$-function if, for any $x\in X$ and $n\in
\mathbb{N}$, $x\in g(n, x)$. A space $X$ is called a {\it $\beta$-space} \cite{Ho72} if there
exists a $g$-function such that, for every $x\in X$ and sequence
$\{x_{n}\}$ in $X$, if $x\in g(n, x_{n})$ for each $n\in\mathbb{N}$,
then $\{x_{n}\}$ has a cluster point in $X$.
Obviously every developable space is a $\beta$-space.

\begin{theorem}\label{t5}
A space $X$ is metrizable if and only if $X$ is a $\beta$-space with
a regular base at non-isolated points.
\end{theorem}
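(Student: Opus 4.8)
The plan is to prove Theorem~\ref{t5} by combining the structural results already established with the defining property of $\beta$-spaces. The forward direction is immediate: every metrizable space has a regular base (Theorem~\ref{t2}), hence a regular base at non-isolated points, and every metrizable space is developable and therefore a $\beta$-space. So the substance lies in the reverse implication, where I must upgrade a $\beta$-space possessing a regular base at non-isolated points to a genuine metrizable space. By Theorem~\ref{t0} such a space has a strong development at non-isolated points, and by Lemma~\ref{l3} it is automatically monotonically normal and paracompact; in particular it is collectionwise normal and regular. The goal is to show it is in fact developable (equivalently, has a strong development), since a regular developable space is metrizable—or, more cleanly, to produce a genuine strong development and invoke \cite[Theorem 5.4.2]{ER} exactly as in the proof of Theorem~\ref{t3}.

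First I would fix a strong development at non-isolated points $\{\mathcal{W}_i\}_{i\in\mathbb{N}}$ (from Theorem~\ref{t0}) together with the $\beta$-space $g$-function $g$. The key object to control is the isolated-point set $I$: the development only behaves well off $I$, so exactly as in Theorem~\ref{t3} the task reduces to showing that $X-I$ is a $G_\delta$ (equivalently $I$ is an $F_\sigma$), because once $I$ is $F_\sigma$ one can interleave the covers $\mathcal{U}_n=\{G_n\}\cup\{\{x\}:x\in I-G_n\}$ with the $\mathcal{W}_n$ to obtain a strong development on all of $X$ and conclude metrizability verbatim. So the real content becomes: a $\beta$-space with a strong development at non-isolated points has $I$ an $F_\sigma$-set.

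The strategy for that reduction is to use the $\beta$-property to force a countable covering structure on $I$. The intended argument runs by contradiction or by a direct extraction: I would shrink $g$ using the strong development at non-isolated points so that, for each non-isolated $x$, the $g(n,x)$ witness $\mathrm{st}(V,\mathcal{W}_n)$-smallness, and then argue that if $I$ failed to be $F_\sigma$ one could build a sequence $\{x_n\}$ with $x\in g(n,x_n)$ whose only possible cluster points lie in $I$, contradicting either the $\beta$-clustering (which would force a cluster point whose neighborhoods are small) or the discreteness of isolated points. Concretely, for each $n$ set $H_n=\{x\in X: \mathrm{st}(x,\mathcal{W}_n) \text{ meets } X-I\}$ or a similar derived-set approximation, show each relevant piece is open, and check $X-I=\bigcap_n H_n$; the $\beta$-function is what guarantees that a point lying in all $H_n$ cannot be isolated (an isolated point is caught in finitely many stages), giving the $G_\delta$ representation of $X-I$.

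The hard part will be this last step: cleanly extracting the $F_\sigma$ (equivalently $G_\delta$) structure of $I$ from the $\beta$-property, since the $\beta$-condition only delivers a cluster point of a sequence, not a neighborhood count, and one must marry it to the strong development to pin the cluster point's location relative to $I$. I expect to need a careful choice of the sequence $\{x_n\}$—typically $x_n$ chosen from $g(n,\cdot)$-neighborhoods shrinking toward a candidate isolated point—so that its guaranteed cluster point is simultaneously forced into $X-I$ (by the $\beta$-clustering) and into $I$ (by the construction), producing the contradiction. Once $I$ is shown to be $F_\sigma$, the remainder is the routine interleaving argument of Theorem~\ref{t3} together with \cite[Theorem 5.4.2]{ER}, which I would state but not belabor.
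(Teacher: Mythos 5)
Your reduction is exactly the paper's: handle the forward direction via Theorem~\ref{t2}, and for the converse invoke Theorem~\ref{t0} and Lemma~\ref{l3} to get paracompactness, then observe that by Theorem~\ref{t3} it suffices to show $I(X)$ is an $F_\sigma$-set, after which the interleaving of covers finishes the job. Up to that point you match the paper. But the entire content of the theorem is the step you defer, and what you offer for it does not work. Your concrete candidate $H_n=\{x\in X:\mathrm{st}(x,\mathcal{W}_n)\text{ meets }X-I\}$ makes no use of the $g$-function at all, so the $\beta$-property can never enter to show $\bigcap_n H_n\subset X-I$; indeed in the Michael line (Example~\ref{e0}) every isolated point of $B$ lies in such an $H_n$ for all $n$, so sets of this shape cannot isolate $X-I$. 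Likewise, arguing ``by contradiction from the failure of $F_\sigma$-ness'' is not a viable route: the negation of ``$I$ is $F_\sigma$'' gives you nothing pointwise to build a sequence from.

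The missing idea is the specific construction the paper uses. From Lemma~\ref{l2} write the regular base as $(\bigcup_n\mathcal{B}_n)\cup\mathcal{I}(X)$ with each $\mathcal{B}_n$ locally finite at non-isolated points and successively refining, and for $x\in X-I$ set $b(n,x)=\cap\{B\in\mathcal{B}_n:x\in B\}$; local finiteness makes this open, and the refinement property makes $\{b(n,x)\}_n$ a local base at $x$. Then put $h(n,x)=(\cap_{i\le n}g(i,x))\cap b(n,x)$ and $H_n=\cup\{h(n,x):x\in X-I\}$. If $x\in\bigcap_n H_n$, choose $x_n\in X-I$ with $x\in h(n,x_n)$; since $x\in g(n,x_n)$ the $\beta$-property yields a cluster point $y$, which must be non-isolated, and a subsequence with $x_{n_i}\in b(i,y)$ satisfies $b(i,x_{n_i})\subset b(i,y)$, whence $x\in\bigcap_i b(i,y)=\{y\}\subset X-I$. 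It is precisely this marriage of $g$ with the sets $b(n,\cdot)$ coming from the regular base — not from the development $\{\mathcal{W}_i\}$ alone — that pins the cluster point down to equal $x$; without it your sketch cannot be closed, so the proposal has a genuine gap at its central step.
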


\begin{proof}
We only need to prove the sufficiency. Let $X$ be a $\beta$-space
with a regular base at non-isolated points. By Theorem~\ref{t3}, it
suffices to prove that $I(X)$ is an $F_\sigma$-set. Suppose $g$ is a
$g$-function satisfying the above definition of $\beta$-spaces.
Since $X$ has a regular base at non-isolated points, $X$ has a
regular base at non-isolated points
$\mathcal{B}=(\bigcup_{n\in\mathbb{N}}\mathcal{B}_{n})\cup\mathcal{I}(X)$
satisfying Lemma~\ref{l2}, where $\mathcal{B}_{n}$ is locally finite at
non-isolated points and $\mathcal{B}_{n+1}\cup\mathcal{I}(X)$
refines $\mathcal{B}_{n}\cup\mathcal{I}(X)$ for each
$n\in\mathbb{N}$. For each $n\in\mathbb{N}$ and $x\in X-I$, put
$$b(n, x)=\cap\{B\in\mathcal{B}_{n}: x\in B\}.$$
Then $\{b(n, x)\}_{n\in\mathbb{N}}$ is a local base for $x\in X-I$.
For each $n\in\mathbb{N}$, put $$h(n, x)=(\cap\{g(i, x): i\leq
n\})\cap b(n, x), ~x\in X-I;$$
$$H_{n}=\cup\{h(n, x): x\in X-I\}.$$
Then $X-I\subset H_{n}$ and $H_{n}$ is an open subset for $X$. We
next prove $X-I=\bigcap_{n\in\mathbb{N}}H_{n}$. Let
$x\in\bigcap_{n\in\mathbb{N}}H_{n}$. Then there exists some point
$x_{n}\in X-I$ such that $x\in h(n, x_{n})$ for each
$n\in\mathbb{N}$. Since $X$ is a $\beta$-space and $x\in g(n,
x_{n})$, $\{x_{n}\}$ has a cluster point in $X$. Let $y$ be a
cluster point of $\{x_{n}\}$. Then $y\in X-I$ and $b(n, y)$ is an
open neighborhood of $y$. Without loss of generality, we can assume
$x_{n_{i}}\in b(i, y)$ for each $i\in\mathbb{N}$. We will show that $b(i,
x_{n_{i}})\subset b(i, y)$. If not, choose a point $z\in b(i,
x_{n_{i}})-b(i, y)$, then there exists a $B\in\mathcal{B}_{i}$ such
that $y\in B$ and $z\notin B$. Since $x_{n_{i}}\in b(i, y)\subset
B$, $z\in b(i, x_{n_{i}})\subset B$, a contradiction. Hence
$$x\in\bigcap_{i\in\mathbb{N}}h(n_{i},
x_{n_{i}})\subset\bigcap_{i\in\mathbb{N}}h(i, x_{n_{i}})
\subset\bigcap_{i\in\mathbb{N}}b(i, y)=\{y\},$$ i.e, $x=y\in X-I.$
Thus $X-I=\bigcap_{n\in\mathbb{N}}H_{n}$, and $I$ is an
$F_{\sigma}$-set for $X$. By Theorem~\ref{t3}, $X$ is metrizable.
\end{proof}

\noindent{\bf Remark} The Stone-\v{C}ech compactification
$\beta\mathbb{N}$ of $\mathbb{N}$ is a $\beta$-space, but it is not
a perfect space \cite[Corollary 3.6.15]{ER}; Sorgenfrey line is a
perfect space, but it is not a $\beta$-space \cite[Example
4.4]{Ho72}. Hence, Theorem~\ref{t3} and Theorem~\ref{t5} are independent each
other.

\vskip 0.5cm
\section{Relations with Generalized Metrizable Spaces}
\begin{definition}\cite{WF}
Let $X$ be a topological space and let $A$ be a subset of $X$. The {\it discretization} of $X$ by $A$
is the space whose topology is generated by the base
$\{U: U \mbox{~is an open subset of~} X\}\cup\{\{x\}:x\in
A\}$.
It is denoted by
$X_A$ in \cite[Example 5.1.22]{ER}. We say that a space $Y$ is a {\it discretization} of $X$ if $Y=X_A $ for some $A\subset X$.
\end{definition}

\begin{theorem}\label{t6}
Let $X$ be a metric space. If $A\subset X$ and $X_{A}$ is the discretization
of $X$ by $A$, then $X_{A}$ has a regular base at
non-isolated points.
\end{theorem}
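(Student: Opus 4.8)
The plan is to construct a regular base for $X_A$ directly from a regular base for the metric space $X$, rather than routing through the strong-development characterization of Theorem~\ref{t0}. Since $X$ is metrizable, Theorem~\ref{t2} supplies a base $\mathcal{B}_0$ for $X$ that is regular at \emph{every} point of $X$. I would then set $\mathcal{P}=\mathcal{B}_0\cup\{\{x\}:x\in A\}$ and claim that $\mathcal{P}$ is a base for $X_A$ which is regular at each non-isolated point of $X_A$.

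First I would verify that $\mathcal{P}$ is a base for $X_A$: every basic open set of $X_A$ is either an $X$-open set, hence a union of members of $\mathcal{B}_0$, or a singleton $\{x\}$ with $x\in A$, which already belongs to $\mathcal{P}$. Next I would pin down the non-isolated points. A point of $X_A$ is isolated exactly when it lies in $A$ or is already isolated in $X$, so $I(X_A)=A\cup I(X)$, and every non-isolated point $x$ of $X_A$ satisfies $x\notin A$ and is non-isolated in $X$. For such an $x$ the crucial observation is that its $X_A$-neighborhood filter coincides with its $X$-neighborhood filter: any basic $X_A$-open set containing $x$ must be $X$-open, since the extra singletons $\{y\}$ with $y\in A$ cannot contain $x$ (as $x\notin A$); conversely each $X$-open neighborhood is $X_A$-open. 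Thus every $X_A$-neighborhood of $x$ is an $X$-neighborhood and vice versa.

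With these preparations, fix a non-isolated $x$ and an $X_A$-neighborhood $U$ of $x$. Reading $U$ as an $X$-neighborhood, regularity of $\mathcal{B}_0$ at $x$ in $X$ produces an $X$-open (hence $X_A$-open) set $V$ with $x\in V\subset U$ for which $\{B\in\mathcal{B}_0:B\cap V\neq\emptyset,\ B\not\subset U\}$ is finite. I would then argue that enlarging $\mathcal{B}_0$ to $\mathcal{P}$ contributes nothing new to this collection: if $\{y\}\in\mathcal{P}$ with $y\in A$ meets $V$, then $y\in V\subset U$, so $\{y\}\subset U$, and therefore $\{y\}$ fails the condition $B\not\subset U$ and is excluded. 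Consequently $\{B\in\mathcal{P}:B\cap V\neq\emptyset,\ B\not\subset U\}$ equals the finite $\mathcal{B}_0$-collection, which establishes that $\mathcal{P}$ is regular at $x$, and hence regular at non-isolated points.

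This argument has no serious obstacle; the only points requiring care are the coincidence of the neighborhood filters at non-isolated points and the observation that the adjoined singletons, being minimal, can never simultaneously meet $V$ and escape the larger neighborhood $U\supset V$. Together these show precisely why discretizing a metric space by $A$ is harmless for regularity of the base at the surviving non-isolated points.
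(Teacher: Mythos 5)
Your proposal is correct and follows exactly the paper's own argument: take a regular base $\mathcal{B}_1$ for the metric space $X$ (via Theorem~\ref{t2}) and adjoin the singletons $\{\{x\}:x\in A\}$, the paper simply declaring the result ``obviously'' a regular base at non-isolated points of $X_A$. Your write-up just supplies the routine verifications (that $I(X_A)=A\cup I(X)$, that neighborhood filters at surviving non-isolated points are unchanged, and that the adjoined singletons meeting $V$ lie inside $U$) that the paper leaves implicit.
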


\begin{proof}
Since $X$ is a metric space, $X$ has a regular base
$\mathcal{B}_{1}$. Let $\mathcal{B}=\mathcal{B}_{1}\cup\{\{x\}:x\in
A\}$. Obviously, $\mathcal{B}$ is a regular base at non-isolated
points for $X_{A}$.
\end{proof}

\noindent{\bf Remark} If a space $X$ with a regular base at
non-isolated points, then is it a discretizable space of a metric
space? The answer is negative, see Example~\ref{e1}. Recall that $X$ is said to have
a $G_{\delta}$-{\it diagonal} if there exists a sequence
$\{\mathcal{U}_{n}\}_{n\in\mathbb{N}}$ of open covers such that
$\{x\}=\bigcap_{n\in\mathbb{N}}\mbox{st}(x, \mathcal{U}_{n})$ for
every $x\in X$.

\begin{example}\label{e1}
There exists a space $Y$ having a regular base at non-isolated
points. However, $Y$ is not a discretization of a metric space.
\end{example}

Let $X$ be the Michael line in Example~\ref{e0}, and denote it by
$X_{B}$. Let $X^{\ast}$ be a copy of $X_{B}$ and
$f:X_{B}\rightarrow X^{\ast}$ a homeomorphic map. Put
$Z=X_{B}\bigoplus X^{\ast}$ and let $g:Z\rightarrow Y$ be a quotient
map by identifying $\{x, f(x)\}$ to a point for each $x\in
X_B\setminus B$ in $Z$. Then $Y$ is a quotient space.

By \cite{PO}, it is easy to see $Y$ has no $G_{\delta}$-diagonal.
Since the discretization of a metric space has a
$G_{\delta}$-diagonal, $Y$ is not a discretization of a metric space. We next prove that $Y$ has a regular base at non-isolated
points.

Put $\mathcal{I}=\{\{x\}: x\in B\}$ and let $\mathcal{B}$ be a
regular base of $\mathbb I$ with the Euclidean topology. Then
$\mathcal{B}\cup\mathcal{I}$ is a regular base at non-isolated
points for $X_{B}$. Hence $f(\mathcal{B})\cup f(\mathcal{I})$ is a
regular base at non-isolated points for $X^{\ast}$. Then
$\mathcal{G}=\{g(B\cup f(B)): B\in \mathcal{B}\}\cup\mathcal{I}\cup
f(\mathcal{I})$ is a
 regular base at non-isolated
points for $Y$.

Indeed, it is easy to see that $\mathcal{G}$ is a base for $Y$. For
every $y\in Y-I(Y)$ and each open neighborhood $U$ of $y$ in $Y$,
there exists a point $x\in X_{B}$ such that $g(x)=y$. Then
$g(f(x))=y$, and $x, f(x)\in g^{-1}(U)$. Since
$$\mathcal{B}_{0}=\mathcal{B}\cup f(\mathcal{B})\cup \mathcal{I}\cup
f(\mathcal{I})$$ is a regular base at non-isolated points for $Z$,
there exist open neighborhoods $V_{x}, V_{f(x)}\subset g^{-1}(U)$ of
$x, f(x)$ in $Z$ respectively such that the set of all members of
$\mathcal{B}_{0}$ that meet $V_{x}$ and $Z-g^{-1}(U)$ is finite, and the
set of all members of $\mathcal{B}_{0}$ that meet $V_{f(x)}$ and
$Z-g^{-1}(U)$ is also finite. Since $f$ is a homeomorphic map, there
exists a $B\in \mathcal{B}$ such that $x\in B\subset V_{x}$ and
$f(x)\in f(B)\subset V_{f(x)}$. Then $g(x)=y\in g(B\cup f(B))\subset
U$. Since the set of all members of $\mathcal{B}_{0}$ that meet $B\cup
f(B)$ and $Z-g^{-1}(U)$ is finite. If $V\in\mathcal{B}_{0}$, then
$g^{-1}(g(V))=V$, hence the set of all members of $\mathcal{G}$ that
meet $g(B\cup f(B))$ and $Y-U$ is finite. Thus $Y$ has a regular
base at non-isolated points.

\begin{definition}\cite{WF}
An {\it ortho-base} $\mathcal{B}$ for $X$ is a base of $X$ such that
either $\cap \mathcal{A}$ is open in $X$ or $\cap
\mathcal{A}=\{x\}\notin\mathcal I(X)$ and $\mathcal{A}$ is a
neighborhood base at $x$ in $X$ for each $\mathcal{A}\subset
\mathcal{B}$. A space $X$ is a {\it proto-metrizable space} if it is
a paracompact space with an ortho-base.
\end{definition}

Recall that a space $X$ is called a {\it $\gamma$-space} if there exists a $g$-function $g(n, x)$ for
$X$ satisfying for each $x\in X$ and sequences $\{x_{n}\},
\{y_{n}\}$ if $x_{n}\in g(n, y_{n})$ and $y_{n}\in g(n, x)$ for each
$n\in\mathbb N$, then $x_{n}\to x$.

\begin{theorem}\label{t7}
If a space $X$ has a regular base at non-isolated points, then $X$ is:
\begin{enumerate}
\item a proto-metrizable space, and

\item a $\gamma$-space.
\end{enumerate}
\end{theorem}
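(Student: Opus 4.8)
I would treat the two assertions separately, in both cases reusing the decomposition $\mathcal{B}=(\bigcup_{n}\mathcal{B}_{n})\cup\mathcal{I}(X)$ from Lemma~\ref{l2} and the pointwise sets $b(n,x)=\cap\{B\in\mathcal{B}_{n}:x\in B\}$ introduced in the proof of Theorem~\ref{t5}. For (1), Theorem~\ref{t1} already gives that $X$ is paracompact, so it remains only to produce an ortho-base, and I claim that any regular base $\mathcal{B}$ at non-isolated points is itself one. Fix $\mathcal{A}\subset\mathcal{B}$ and suppose $\cap\mathcal{A}$ is not open; pick $x\in\cap\mathcal{A}$ that is not interior to $\cap\mathcal{A}$. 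Such an $x$ cannot be isolated, since otherwise $\{x\}\subset\cap\mathcal{A}$ would be an open neighborhood of $x$. Hence $x$ is non-isolated, and I would show $\mathcal{A}$ is a neighborhood base at $x$: were it not, some neighborhood $U$ of $x$ would contain no member of $\mathcal{A}$, and regularity of $\mathcal{B}$ at $x$ would yield $V$ with $x\in V\subset U$ for which $\{B\in\mathcal{B}:B\cap V\neq\emptyset,\ B\not\subset U\}$ is finite; since every $B\in\mathcal{A}$ contains $x\in V$ and is contained in no such $U$, the family $\mathcal{A}$ would be finite and $\cap\mathcal{A}$ open, a contradiction. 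As $X$ is $T_{1}$ and $\mathcal{A}$ is a neighborhood base at $x$, we conclude $\cap\mathcal{A}=\{x\}$ with $x$ non-isolated, which is exactly the second ortho-base alternative; this completes (1).

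For (2), I would define a $g$-function by $g(n,x)=b(n,x)$ when $x\in X-I$ and $g(n,x)=\{x\}$ when $x\in I$. Two facts underpin the verification. First, $\{b(n,x)\}_{n}$ is a decreasing local base at each non-isolated $x$: openness of $b(n,x)$ uses local finiteness of $\mathcal{B}_{n}$ at $x$ (so only finitely many of its members contain $x$), the local-base property uses that $\mathcal{B}$ is a base, and monotonicity $b(n+1,x)\subset b(n,x)$ follows because, for each $B'\in\mathcal{B}_{n}$ through $x$, the refinement of $\mathcal{B}_{n}\cup\mathcal{I}(X)$ by $\mathcal{B}_{n+1}\cup\mathcal{I}(X)$ gives some $C$ with $x\in C\subset B'$ which, $x$ being non-isolated, is not an isolated singleton and hence lies in $\mathcal{B}_{n+1}$. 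Second, the inclusion already extracted inside the proof of Theorem~\ref{t5}: if $y\in b(n,x)$ then $b(n,y)\subset b(n,x)$. Granting these, take $x$ together with sequences satisfying $x_{n}\in g(n,y_{n})$ and $y_{n}\in g(n,x)$. If $x$ is isolated, the singleton values force $y_{n}=x$ and then $x_{n}=x$, so $x_{n}\to x$. If $x$ is non-isolated then $y_{n}\in b(n,x)$, and whether $y_{n}$ is isolated (so $x_{n}=y_{n}$) or not (so $x_{n}\in b(n,y_{n})\subset b(n,x)$ by the second fact), in all cases $x_{n}\in b(n,x)$ for every $n$; the decreasing-local-base property then forces $x_{n}\to x$, establishing the $\gamma$-condition.

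The main obstacle, such as it is, lies in the bookkeeping for (2): carefully confirming that each $b(n,x)$ is open and that the refinement supplied by Lemma~\ref{l2} can never replace a set through a non-isolated point by an isolated singleton, since both the monotonicity $b(n+1,x)\subset b(n,x)$ and the inclusion $b(n,y)\subset b(n,x)$ rest on this. Part (1) is essentially formal once one observes that failure of the neighborhood-base alternative would make $\mathcal{A}$ finite and hence $\cap\mathcal{A}$ open; the only point requiring attention there is the reduction to a non-isolated witness $x$.
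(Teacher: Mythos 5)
Your argument for part (1) is correct and is in fact more self-contained than the paper's: the paper obtains paracompactness from Lemma~\ref{l3} and Theorem~\ref{t0} and then simply cites \cite[Theorem 3.4]{LL} for the existence of an ortho-base, whereas you verify directly that a base which is regular at non-isolated points is itself an ortho-base. Your verification is sound: a non-interior point $x$ of a non-open $\cap\mathcal{A}$ must be non-isolated; if some neighborhood $U$ of $x$ contained no member of $\mathcal{A}$ as a subset, regularity at $x$ would force $\mathcal{A}$ to be finite and $\cap\mathcal{A}$ open; and the $T_{1}$ axiom then gives $\cap\mathcal{A}=\{x\}$. This buys a proof that does not lean on the external reference.

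For part (2) you follow the paper's route exactly (same $g$-function, same two key facts), but your justification of the monotonicity $b(n+1,x)\subset b(n,x)$ misreads the refinement relation. That $\mathcal{B}_{n+1}\cup\mathcal{I}(X)$ refines $\mathcal{B}_{n}\cup\mathcal{I}(X)$ means every member of the former is contained in \emph{some} member of the latter; it does not supply, for each prescribed $B'\in\mathcal{B}_{n}$ with $x\in B'$, a $C\in\mathcal{B}_{n+1}$ with $x\in C\subset B'$. If $\mathcal{B}_{n}$ has two members through $x$ neither of which contains the other, the member of $\mathcal{B}_{n+1}$ through $x$ may lie in one but not the other, and then $b(n+1,x)\not\subset b(n,x)$. (The paper asserts that $\{g(n,x)\}_{n}$ is decreasing without proof, so it shares this weakness.) The gap is repairable because the convergence step only needs the weaker fact that for every open $U\ni x$ there is $N$ with $b(n,x)\subset U$ for all $n\geq N$: by regularity at $x$ choose an open $V$ with $x\in V\subset U$ such that $\mathcal{F}=\{B\in\mathcal{B}: B\cap V\neq\emptyset \mbox{ and } B\not\subset U\}$ is finite; every $B\in\mathcal{B}_{n}$ with $x\in B$ meets $V$, and each member of $\mathcal{B}$ lies in at most one $\mathcal{B}_{n}$, so once $n$ exceeds the finitely many indices carrying members of $\mathcal{F}$, every $B\in\mathcal{B}_{n}$ containing $x$ lies in $U$ and hence $b(n,x)\subset U$. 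With this substitute, your correctly proved inclusion $b(n,y)\subset b(n,x)$ for $y\in b(n,x)$ yields $x_{n}\in b(n,x)$ for all $n$ and therefore $x_{n}\to x$, completing the $\gamma$-space verification.
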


\begin{proof}
(1) By Lemma~\ref{l3} and Theorem~\ref{t0}, $X$ is a
paracompact space. Also, $X$ has an ortho-base by \cite[Theorem
3.4]{LL}. Hence $X$ is a proto-metrizable space.

(2) To prove part (2), for each $n\in\mathbb{N}\mbox{~and~} x\in X$ define a function
$g:\mathbb{N}\times X\to\tau(X)$ as follows: if $x\in I$, then $g(n,
x)=\{x\}$; if $x\in X-I$, then $g(n, x)=b(n, x)$, where $b(n, x)$ is
the same as in the proof in Theorem~\ref{t5}. Then $\{g(n,
x)\}_{n\in\mathbb{N}}$ is a decreasing and open neighborhood base of
$x$, and if $y\in g(n, x)$, then $g(n, y)\subset g(n, x)$. For each
$x\in X$ and sequences $\{x_{n}\}, \{y_{n}\}$, if $x_{n}\in g(n,
y_{n})$ and $y_{n}\in g(n, x)$ for each $n\in \mathbb{N}$, then
$x_{n}\in g(n, y_{n})\subset g(n, x)$, thus $x_{n}\to x$. Hence $X$
is a $\gamma$-space.
\end{proof}

\begin{example}
There exists a proto-metrizable space which has no regular base at
non-isolated points.
\end{example}

The proto-metrizable but non-$\gamma$-space described in Section 3 in \cite{GZ} works.

\noindent{\bf Remark} From the discussion above, it can be seen that
spaces with a regular base at non-isolated points are strictly
between the discretizations of metrizable spaces and
proto-metrizable spaces.

\begin{corollary}\label{c1}
Let $X$ have a $G_{\delta}$-diagonal. Then the following conditions
are equivalent:
\begin{enumerate}
\item $X$ is a discretizations of a metrizable space;

\item $X$ has a regular base at non-isolated points;

\item $X$ is a proto-metrizable space.
\end{enumerate}
\end{corollary}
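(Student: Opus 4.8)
The plan is to establish the cycle $(1)\Rightarrow(2)\Rightarrow(3)\Rightarrow(1)$. The first two implications need no extra hypothesis and are already at hand: $(1)\Rightarrow(2)$ is exactly Theorem~\ref{t6}, and $(2)\Rightarrow(3)$ is Theorem~\ref{t7}(1). Thus the $G_{\delta}$-diagonal enters only to close the loop, i.e. to prove $(3)\Rightarrow(1)$: a proto-metrizable space with a $G_{\delta}$-diagonal is a discretization of a metrizable space. Accordingly, let $X$ be proto-metrizable, fix an ortho-base $\mathcal{B}$, and let $\{\mathcal{U}_{n}\}_{n\in\mathbb{N}}$ witness the $G_{\delta}$-diagonal, so $\{x\}=\bigcap_{n}\mbox{st}(x,\mathcal{U}_{n})$ for every $x$.

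The engine of the argument is this. If a point $x$ lies in a decreasing sequence $B_{1}\supseteq B_{2}\supseteq\cdots$ of members of $\mathcal{B}$ with $\bigcap_{n}B_{n}=\{x\}$ and $x$ non-isolated, then the defining property of an ortho-base forces $\{B_{n}:n\in\mathbb{N}\}$ to be a neighborhood base at $x$. The $G_{\delta}$-diagonal produces such a sequence at every non-isolated point: choose $B_{n}\in\mathcal{B}$ with $x\in B_{n}\subset\mbox{st}(x,\mathcal{U}_{n})$ and pass to finite intersections, so that $\bigcap_{n}B_{n}=\{x\}$. Hence $X$ is first countable at each non-isolated point, with a decreasing countable neighborhood base of ortho-base members. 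The first preparatory step I would carry out is to \emph{upgrade} $\mathcal{B}$ to an ortho-base in which \emph{every} point of $X$, including the isolated ones, lies in such a decreasing $\omega$-chain of non-singleton members whose intersection is that point; I expect to extract these chains by refining the stars $\mbox{st}(x,\mathcal{U}_{n})$ and checking that the ortho-base property survives the refinement.

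With such a base in hand, put $A=I(X)$ and define a ``parent'' topology $\tau'$ on the same underlying set, generated by $\mathcal{B}'=\mathcal{B}\setminus\mathcal{I}(X)$ (the ortho-base with the isolated-point singletons deleted). By the upgrade every $a\in A$ still lies in non-singleton members of $\mathcal{B}'$, so $(X,\tau')$ is $T_{1}$ with no isolated points, $\tau'\subset\tau(X)$, and $\tau(X)$ is generated by $\mathcal{B}'\cup\{\{a\}:a\in A\}=\mathcal{B}$; hence re-isolating $A$ recovers $X$, that is $(X,\tau')_{A}=X$. It therefore suffices to show that $(X,\tau')$ is metrizable, for then $X$ is by definition the discretization of the metric space $(X,\tau')$ by $A$. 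Since $(X,\tau')$ has no isolated points, for it ``regular base at non-isolated points'' and ``regular base'' coincide, so by Theorem~\ref{t2} it is enough to exhibit a regular base, equivalently, by Theorem~\ref{t0}, a strong development.

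The main obstacle is precisely the former isolated points of $X$. Inside $(X,\tau')$ every point now carries a decreasing countable neighborhood base of $\mathcal{B}'$-members (the engine applied uniformly, thanks to the upgrade), but to pass from these nested bases to a regular base one must control, at each $x$ and neighborhood $U$, the set $\{B\in\mathcal{B}':B\cap V\neq\emptyset,\ B\not\subset U\}$ for some small $V\ni x$, and force it to be finite; this finiteness is exactly what an ortho-base by itself does not supply. I expect to obtain it by invoking paracompactness of $X$, which should descend to $(X,\tau')$, to replace each level of the chains by a locally finite open family and assemble a locally finite strong development at all points, in the manner of the sufficiency proof of Theorem~\ref{t0}. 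The two genuinely delicate points, and the heart of the proof, are (a) carrying out the ortho-base upgrade while preserving the ortho-base property, and (b) verifying the regularity (finiteness) condition at the former isolated points, where $X$ itself offered no metric-like control. Once these are settled, Theorem~\ref{t0} gives a regular base for $(X,\tau')$, Theorem~\ref{t2} makes $(X,\tau')$ metrizable, and $X=(X,\tau')_{A}$ is a discretization of a metric space, completing $(3)\Rightarrow(1)$.
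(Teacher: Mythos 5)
Your implications $(1)\Rightarrow(2)$ and $(2)\Rightarrow(3)$ coincide with the paper's: they are Theorem~\ref{t6} and Theorem~\ref{t7}(1) respectively, and need no further comment. The whole content of the corollary is $(3)\Rightarrow(1)$, and here the paper simply invokes \cite[Theorem 3.1]{GZ} (Gruenhage--Zenor: a proto-metrizable space with a $G_{\delta}$-diagonal is a discretization of a metrizable space), whereas you set out to prove that implication from scratch. Your sketch does not close: the steps on which it rests are all left unproved, and at least one of them is false as stated.

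Concretely: (a) the ``upgrade'' of the ortho-base, arranging that every isolated point lies in a decreasing chain of non-singleton members with singleton intersection while the ortho-base condition is preserved for \emph{every} subfamily of the enlarged collection, is exactly the kind of delicate combinatorial fact that needs an argument, and you give none; (b) you justify metrizability of $(X,\tau')$ by saying that paracompactness of $X$ ``should descend'' to the coarser topology $\tau'$ --- but paracompactness is not inherited by coarser topologies in general (the discrete topology on $\omega_{1}$ is paracompact while the coarser order topology is not), so this step requires a genuine proof rather than an appeal to heredity; and (c) the finiteness condition defining a regular base at the former isolated points is, as you yourself say, the heart of the matter, and you explicitly defer it (``I expect to obtain it\ldots''). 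What you are attempting to reconstruct in (a)--(c) is precisely the proof of \cite[Theorem 3.1]{GZ}; either cite that theorem, as the paper does, or supply complete arguments for these three points.
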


\begin{proof}
By Theorems~\ref{t6} and~\ref{t7}, we have $(1)\Rightarrow(2)\Rightarrow(3)$.
By \cite[Theorem 3.1]{GZ}, it can be obtained
$(3)\Rightarrow(1)$.
\end{proof}

The condition ``$G_{\delta}$-diagonal'' cannot be omitted in Corollary~\ref{c1} by Example~\ref{e1}.

\begin{question}\label{q1}
Under what conditions  a proto-metrizable space has a regular base
at non-isolated points?
\end{question}

\noindent{\bf Remark} Since a proto-metrizable space is a
paracompact space, Theorem~\ref{t1} is an answer for Question~\ref{q1}.
However, we expect a simpler answer.

\begin{definition}
Let $\mathcal{B}$ be a base of a space $X$. $\mathcal{B}$ is {\it
point-regular} \cite{Al}~({\it point-regular at non-isolated points}
\cite{LL}, resp.) for $X$, if for each (non-isolated, resp.) point
$x\in X$ and $x\in U$ with $U$ open in $X$, $\{B\in\mathcal{B}: x\in
B\not\subset U\}$ is finite.
\end{definition}

Obviously, every regular base at non-isolated points is a
point-regular base at non-isolated points. In \cite{LL}, it is
proved that a space $X$ has a point-regular base at non-isolated
points if and only if $X$ is an open, boundary-compact image of a
metric space. On the other hand, a space $X$ is an open,
boundary-compact, $s$-image of a metric space if and only if $X$ has
a point-countable base which is point-regular at non-isolated
points. The following question is posed in \cite[Question 5.1]{LL}:

\begin{question}\label{q2}\cite[Question 5.1]{LL}
Let a space $X$ have a point-countable base. If $X$ has a
point-regular base at nos-isolated points, is $X$ an open,
boundary-compact, $s$-image of a metric space?
\end{question}

Next, we give an affirmative
answer for Question~\ref{q2}.

A space $X$ is called {\it metalindel\"{o}f}\,\, if every open cover
of $X$ has a point-countable open refinement.

\begin{theorem}\label{t8}
The following are equivalent for a space $X$:
\begin{enumerate}
\item $X$ has a point-countable base, and has a point-regular base at
non-isolated points;

\item $X$ has a point-countable base which is point-regular at
non-isolated points;

\item $X$ is an open boundary-compact, $s$-image of a metric
space;

\item $X$ is an open $s$-image of a metric space, and is an open
boundary-compact image of a metric space;

\item $X$ is a metalindel\"{o}f space with a point-regular base at
non-isolated points.
\end{enumerate}
\end{theorem}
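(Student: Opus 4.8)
The plan is to prove the five conditions equivalent by a short cycle in which all but one implication are either trivial or immediate consequences of known characterizations, and to isolate the genuine content in the implication $(1)\Rightarrow(2)$, which is exactly Question~\ref{q2}.

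First I would dispose of the soft equivalences. The classical theorem that a space is an open $s$-image of a metric space if and only if it has a point-countable base, together with the result of \cite{LL} that a space is an open boundary-compact image of a metric space if and only if it has a point-regular base at non-isolated points, gives $(1)\Leftrightarrow(4)$; and the characterization in \cite{LL} of open boundary-compact $s$-images as spaces with a point-countable base that is point-regular at non-isolated points gives $(2)\Leftrightarrow(3)$. The implications $(3)\Rightarrow(4)$ and $(2)\Rightarrow(1)$ are trivial, since a single witnessing map (respectively a single witnessing base) serves both halves. For the metalindel\"of condition, $(1)\Rightarrow(5)$ is immediate: if $\mathcal P$ is a point-countable base then, for any open cover $\mathcal U$, the family $\{P\in\mathcal P:P\subset U\text{ for some }U\in\mathcal U\}$ is a point-countable open refinement, so $X$ is metalindel\"of. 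Conversely $(5)\Rightarrow(1)$ follows once one knows that a point-regular base at non-isolated points is, at those points, a uniform base, hence point-countable there, and produces a development at the non-isolated points; refining each level by a point-countable open family (metalindel\"ofness) and adjoining $\mathcal I(X)$ yields a point-countable base, which together with the given point-regular base establishes $(1)$.

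The heart of the theorem is $(1)\Rightarrow(2)$: from a point-countable base $\mathcal V$ and a (possibly different) point-regular base $\mathcal B$ at non-isolated points, I must manufacture a single base that is simultaneously point-countable and point-regular at non-isolated points. The first step is a local reduction: since $\mathcal V$ is point-countable, every point is first countable, and since $\mathcal B$ is point-regular (equivalently uniform) at each non-isolated $x$, the family $\{B\in\mathcal B:x\in B\}$ is a countable neighborhood base there; thus $\mathcal B$ is already point-countable at non-isolated points. A useful auxiliary fact, obtained by applying point-regularity at a non-isolated $y$ to the open set $X\setminus\{x\}$, is that for each isolated $x\neq y$ only finitely many members of $\mathcal B$ contain both $x$ and $y$. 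The remaining---and only---defect of $\mathcal B$ is that an isolated point may lie in uncountably many of its members.

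The main obstacle is precisely to repair this isolated-point multiplicity without destroying point-regularity at non-isolated points. I expect the naive common refinement $\{B\cap V:B\in\mathcal B,\ V\in\mathcal V\}$ to fail, because at a non-isolated $x$ it replaces each of the finitely many members of $\mathcal B$ that stick out of a neighborhood $U$ by the countably many sets $B\cap V$ with $x\in V$, turning the finite excess demanded by point-regularity into a merely countable one. The correct construction must therefore keep the sets of $\mathcal B$ intact around the closed set $X\setminus I$ (the sole source of finiteness) while using $\mathcal V$ together with the openness of $I=I(X)$ to thin the family only within the isolated part: one discards, for each isolated point $x$, all but countably many surrounding members of $\mathcal B$ by a simultaneous selection keyed to the countable family $\{V\in\mathcal V:x\in V\}$, so that each $B$ is retained wherever it is needed as a neighborhood of a non-isolated point. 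Arranging this selection so that the resulting family $\mathcal C$ is still a base, is point-countable at every point, and retains the finitely-many-sticking-out property at non-isolated points is the one delicate point of the argument. Once $\mathcal C$ is in hand, $(2)$ holds, the cycle $(1)\Rightarrow(2)\Rightarrow(3)\Rightarrow(4)\Rightarrow(1)$ closes together with $(1)\Leftrightarrow(5)$, and by the characterization of \cite{LL} the space is an open boundary-compact $s$-image of a metric space, which is the affirmative answer to Question~\ref{q2}.
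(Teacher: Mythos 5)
Your treatment of the peripheral implications is essentially the paper's: $(5)\Rightarrow(1)$ by refining the development at non-isolated points with point-countable open covers and adjoining $\mathcal I(X)$ is exactly what the paper does, and routing the remaining equivalences through the known characterizations from \cite{LL} (the paper uses $(2)\Rightarrow(3)$ by \cite[Corollary 3.2]{LL} and $(4)\Rightarrow(5)$ by \cite[Theorem 3.1]{LL}) is unobjectionable. The problem is that you never actually prove $(1)\Rightarrow(2)$, which is the entire content of Question~\ref{q2} and the reason the theorem exists; you correctly locate the difficulty (an isolated point may lie in uncountably many members of the point-regular base $\mathcal P$) and correctly predict that the naive common refinement fails, but the repair you sketch --- ``discard, for each isolated point, all but countably many surrounding members of $\mathcal B$ by a simultaneous selection'' --- is not a construction, and in the form stated it cannot work. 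A member $P$ of the point-regular base that is needed as a neighborhood of some non-isolated point cannot be discarded, yet uncountably many such members may all pass through one isolated point; no subfamily selection resolves this. You must \emph{shrink} the members rather than select among them.

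The paper's actual device is the following. Write $\mathcal P=\bigcup_n\mathcal P_n$ with each $\mathcal P_n$ an open cover that is point-finite at non-isolated points and $\{\mathcal P_n\}$ a development at non-isolated points (this decomposition is quoted from \cite{LL}). Let $\mathcal B'=\{B\in\mathcal B:B\not\subset I(X)\}$ and replace each $P\in\mathcal P_n$ by $\hat P=\bigcup\{B\in\mathcal B':B\subset P\}$. Point-countability of $\hat{\mathcal P}_n=\{\hat P:P\in\mathcal P_n\}$ is then automatic: a point of $\hat P$ lies in some $B\in\mathcal B'$ with $B\subset P$, there are only countably many such $B$ through any point, and each $B\in\mathcal B'$ contains a non-isolated point and hence is contained in only \emph{finitely} many members of $\mathcal P_n$ by point-finiteness at non-isolated points. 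Point-regularity at non-isolated points survives because $\hat P\subset P$, and $(\bigcup_n\hat{\mathcal P}_n)\cup\mathcal I(X)$ is still a base because $\{\mathcal P_n\}$ is a development at non-isolated points. Note that this construction crucially uses the stratification into point-finite levels $\mathcal P_n$, which your outline never invokes; without it the family $\{P\in\mathcal P: B\subset P\}$ need not be countable and the argument collapses. As it stands your proposal has a genuine gap at the one step that matters.
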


\begin{proof}
It is proved in \cite{LL} that if $\mathcal{P}$ is a point-regular
base at non-isolated points for a space $X$, then we can assume that
$\mathcal{P}=\bigcup_{n\in\mathbb{N}}\mathcal{P}_n$ satisfies the
following conditions:

(a)\ $\mathcal{P}_n$ is an open cover and is point-finite at
non-isolated points;

(b)\ $\{\mathcal{P}_n\}$ is a development at non-isolated points for
$X$.

\bigskip

$(1)\Rightarrow(2)$. Suppose that $X$ has a point-countable base
$\mathcal{B}$, and suppose that $X$ has a point-regular base at non-isolated points
$\mathcal{P}$. We can assume that
$\mathcal{P}=\bigcup_{n\in\mathbb{N}}\mathcal{P}_n$ satisfies the
conditions (a) and (b). For each $n\in\mathbb{N}$, put

\hspace{2cm}$\mathcal{B}'=\{B\in\mathcal{B}: B\not\subset I(X)\}$;

\hspace{2cm}$\mathcal{V}_n(B)=\{P\in\mathcal{P}_n: B\subset
P\}$,\,\, $\forall\, B\in\mathcal{B}'$;

\hspace{2cm}$\hat{P}=\cup\{B\in\mathcal{B}':
P\in\mathcal{V}_n(B)\}$,\,\, $\forall\, P\in\mathcal{P}_n$;

\hspace{2cm}$\hat{\mathcal{P}}_n=\{\hat{P}: P\in\mathcal{P}_n\}$.

\noindent Then $\hat{\mathcal{P}}_n$ is point-countable. In fact, if
$x\in\hat{P}\in\hat{\mathcal{P}}_n$, then there is $B'\in\mathcal{B}'$
such that $x\in B'$ and $P\in\mathcal{V}_n(B')$. Since
$\{B\in\mathcal{B}': x\in B\}$ is countable, and each
$\mathcal{V}_n(B)$ is finite for each $B\in\mathcal{B}'$ by the
condition (a), it follows that $\{P\in\mathcal{V}_n(B): x\in
B\in\mathcal{B}'\}$ is countable.

Put
$$\hat{\mathcal{P}}=(\bigcup_{n\in\mathbb{N}}\hat{\mathcal{P}}_n)\cup\mathcal{I}(X).$$
Then $\hat{\mathcal{P}}$ is point-countable. If $x\in U-I$ with $U$
open in $X$, then there is $m\in\mathbb{N}$ such that $x\in\mbox{st}(x,
\mathcal{P}_m)\subset U$ by the condition (b). Take
$P\in\mathcal{P}_m$ with $x\in P$, then there is $B\in\mathcal{B}'$
such that $x\in B\subset P$, thus $P\in\mathcal{V}_m(B)$, and $x\in
B\subset\hat{P}\subset P\subset U$. So $\hat{\mathcal{P}}$ is a base
for $X$. Finally, it is easy to see that $\hat{\mathcal{P}}$ is
point-regular at non-isolated points by $\hat{P}\subset P$ for each
$P\in\mathcal{P}$.

$(2)\Rightarrow(3)$ by \cite[Corollary, 3.2]{LL}.
$(3)\Rightarrow(4)$ is obvious. And $(4)\Rightarrow(5)$ by
\cite[Theorem, 3.1]{LL}.

$(5)\Rightarrow(1)$.
Let $X$ be a metalindel\"{o}f space with a
point-regular base at non-isolated points. As in the proof of $(1)\Rightarrow (2)$, there is a sequence $\{\mathcal{P}_{n}\}$ of open covers of $X$ such that
$\{\mathcal{P}_n\}$ is a development at non-isolated points for $X$.
For each $n\in\mathbb{N}$, let $\mathcal{B}_n$ be a point-countable
open refinement of $\mathcal{P}_n$. And put
$$\mathcal{B}=(\bigcup_{n\in\mathbb{N}}\mathcal{B}_n)\cup\mathcal{I}(X).$$
Then $\mathcal{B}$ is a point-countable base for $X$. In fact, if a
non-isolated point $x\in U$ with $U$ open in $X$, then there is
$n\in\mathbb{N}$ such that $\mbox{st}(x, \mathcal{P}_n)\subset U$.
Take $B\in\mathcal{B}_n$ with $x\in B$, then $x\in
B\subset\mbox{st}(x, \mathcal{B}_n)\subset\mbox{st}(x,
\mathcal{P}_n)\subset U$.
\end{proof}

By Theorem~\ref{t8}, the following is obtained.

\begin{corollary}
Every space with a regular base at non-isolated points has a
point-countable base.
\end{corollary}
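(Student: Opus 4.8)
The plan is to route the claim through Theorem~\ref{t8}, whose condition (1) is precisely the conjunction of the hypothesis we are handed and the conclusion we want. So the whole task reduces to placing $X$ inside one of the other equivalent conditions of that theorem. The natural target is condition (5): a metalindel\"of space with a point-regular base at non-isolated points. The first ingredient is immediate, since a regular base at non-isolated points is in particular a point-regular base at non-isolated points (noted as obvious right after the definition of point-regularity); thus $X$ already carries a point-regular base at non-isolated points.

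The remaining ingredient is that $X$ is metalindel\"of. Here I would invoke the paracompactness that has already been established: by Lemma~\ref{l3} together with Theorem~\ref{t0} (equivalently, by Theorem~\ref{t7}(1), which asserts proto-metrizability and hence paracompactness), a space with a regular base at non-isolated points is paracompact. Every paracompact space is metalindel\"of, because a locally finite open refinement of a given open cover is \emph{a fortiori} point-countable. Consequently $X$ satisfies condition (5) of Theorem~\ref{t8}.

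Applying the equivalence $(5)\Rightarrow(1)$ of Theorem~\ref{t8} then yields that $X$ has a point-countable base, which is exactly the assertion of the corollary. I do not expect a genuine obstacle here: the argument is a short chain of previously proved implications, and the only two passages needing a word of justification, namely that a regular base at non-isolated points is point-regular at non-isolated points and that paracompactness forces metalindel\"ofness, are both routine. The single point to state with care is the direction of application within Theorem~\ref{t8}, i.e.\ that we are using $(5)\Rightarrow(1)$ rather than the reverse, so that the point-regular base supplied by the hypothesis feeds the metalindel\"of side and the point-countable base emerges as the conclusion.
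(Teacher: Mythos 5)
Your proposal is correct and matches the paper's intent: the paper derives the corollary simply ``by Theorem~\ref{t8}'', and the natural instantiation is exactly your route --- a regular base at non-isolated points is point-regular at non-isolated points, the space is paracompact (Lemma~\ref{l3} and Theorem~\ref{t0}) and hence metalindel\"of, so condition (5) of Theorem~\ref{t8} holds and $(5)\Rightarrow(1)$ yields the point-countable base. No gaps; you have merely made explicit the steps the paper leaves implicit.
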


\vskip0.9cm

\end{document}